\documentclass[pdflatex,sn-mathphys-num]{sn-jnl}

\usepackage{graphicx}%
\usepackage{multirow}%
\usepackage{amsmath,amssymb,amsfonts}%
\usepackage{amsthm}%
\usepackage{mathrsfs}%
\usepackage[title]{appendix}%
\usepackage{xcolor}%
\usepackage{textcomp}%
\usepackage{manyfoot}%
\usepackage{booktabs}%
\usepackage{algorithm}%
\usepackage{quiver}
\usepackage{algorithmicx}%
\usepackage{algpseudocode}%
\usepackage{listings}%
\usepackage{enumitem}%

\theoremstyle{thmstyleone}%
\newtheorem{theorem}{Theorem}
\numberwithin{theorem}{section} 
\newtheorem{corollary}[theorem]{Corollary}

\newcommand{\Z}{\mathbb{Z}}
\newcommand{\R}{\mathbb{R}}

\theoremstyle{thmstyletwo}%

\theoremstyle{thmstylethree}%

\begin{document}

\title[Perfect functions on Stratifolds]{Perfect discrete Morse functions on Stratifolds}


\author*[1]{\fnm{José de Jesús} \sur{Liceaga Martínez}}\email{jose.liceaga@cimat.mx}

\author[1]{\fnm{Jesús} \sur{Rodríguez Viorato}}\email{jesusr@cimat.mx}

\author[1]{\fnm{José Carlos} \sur{Gomez Larrañaga}}\email{jcarlos@cimat.mx}

\equalcont{These authors contributed equally to this work.}

\affil*[1]{\orgdiv{CIMAT}, \orgname{Organization}, \orgaddress{\street{Street}, \city{City}, \postcode{100190}, \state{State}, \country{Country}}}


\nocite{*}

\abstract{In this paper, we study the computation of optimal discrete Morse functions on stratifolds. In particular, we present an algorithm that efficiently computes such functions for a broad class of them. Moreover, we characterize the conditions under which these functions are perfect.}

\keywords{topology, discrete Morse function, optimal, perfect, stratifolds}



\maketitle

\section{Introduction}\label{sec1}
Discrete Morse theory, introduced by Forman in the 1990s \cite{Forman}, provides a combinatorial counterpart to classical smooth Morse theory \cite{MorseContinuous}, in such a way that it is useful to study the topology of discrete objects like simplicial or CW complexes. One of its central features is that it allows us to compute the homology of a complex by focusing just on a distinguished subset of its cells known as \textit{critical} cells. This is particularly important in applications, as the number of operations required to compute the homology groups increases dramatically when we add more simplices \cite{AlgoritmoMatrices}. Hence, it is useful to obtain discrete Morse functions with the minimum amount of critical cells possible. Such functions are called \textit{optimal} discrete Morse functions.     

The problem of finding optimal discrete Morse functions has been widely studied, particularly for low-dimensional simplicial complexes, such as dimensions 1 and 2, as in \cite{Optimal, Perfect}. However, as Ayala et al. \cite{Perfect} noted, the existence of optimal discrete Morse functions has been mainly tackled from the point of view of \textit{perfect} discrete Morse functions \cite{Perfect, 3-mainifolds, connected-sums}, which are a more restrictive class of optimal functions. 

In this work, we study the existence of optimal discrete Morse functions for \textit{stratifolds}, a concept developed by Gómez-Larrañaga et al. \cite{estratificies}. In particular, we introduce a special type of stratifolds for which we obtain optimal discrete Morse functions in linear time. Moreover, we characterize precisely when these functions are perfect by studying their Morse matchings. 

\section{Preliminaries}

In the following section, we will introduce the basic concepts and theorems regarding discrete Morse theory. For a more in-depth review, the reader may consult \cite{UsersGuide, texbook}. 

Given a topological space \(X\) and a triangulation \(K\) of \(X\), a \textbf{discrete Morse function} (dmf) is a function \(f: K \to \mathbb{R}\) such that for each cell \(\sigma^i \in K\) of dimension \(i\) it follows that 

\begin{enumerate}
    \item \(|\{\tau \in K : \sigma^i < \tau^{i + 1}, f(\tau) \leq f(\sigma)\}| \leq 1\), and \medskip

    \item \(|\{\tau \in K : \tau^{i - 1} < \sigma^i, f(\sigma) \leq f(\tau)\}| \leq 1\),
\end{enumerate}

where \(\sigma < \tau\) means that \(\sigma\) is a face of \(\tau\). If a cell \(\sigma^i \in K\) is such that 

\begin{enumerate}
    \item \(|\{\tau \in K : \sigma^i < \tau^{i + 1}, f(\tau) \leq f(\sigma)\}| = 0\), and \medskip

    \item \(|\{\tau \in K : \tau^{i - 1} < \sigma^i, f(\sigma) \leq f(\tau)\}| = 0\),
\end{enumerate}
we say that it is a \textbf{critical cell}. The number of critical cells of dimension \(i\) that a dmf \(f\) has will be denoted as \(m_i(f)\). Moreover, the superscript of a cell will be dropped when its dimension is clear. 

\medskip 
A \textbf{discrete vector field} over a simplicial complex \(K\) is a subset \(V \subseteq \{(\tau^i, \sigma^{i + 1}) \in K \times K ~|~ \tau^i < \sigma^{i + 1}\}\) in which every cell appears in at most one pair. We can think about these vector fields geometrically as drawing an arrow from \(\tau^i\) to \(\sigma^{i + 1}\) if \((\tau^i, \sigma^{i + 1}) \in V\). Another way to visualize them is by considering their Morse matchings \cite{Matchings}: that is, the Hasse diagram of the partial order induced by inclusion on \(K\), where we highlight the pairs \((\tau^i, \sigma^{i + 1})\). These representations are shown in Figure \ref{fig:representations}. 

\bigskip 

\begin{figure}[h]
    \centering
    \includegraphics[scale = 0.8]{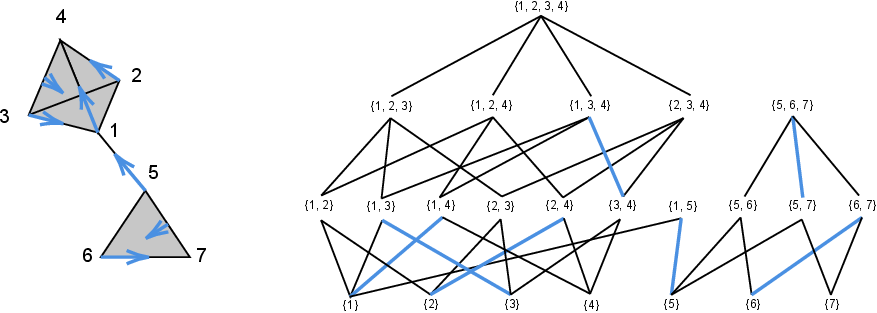}
    \caption{Left: Geometric representation of a discrete vector field \(V\). Right: Morse matching of the same discrete vector field \(V\). The pairs belonging to \(V\) are marked in blue.}
    \label{fig:representations}
\end{figure}

\medskip 
It is clear that any dmf \(f : K \to \R\) induces a discrete vector field \(-\nabla f := \{(\tau^i, \sigma^{i + 1}) ~|~ \tau^i < \sigma^{i + 1} \mbox{ and } f(\tau^i) \geq f(\sigma^{i + 1})\}\). It turns out that under certain conditions, the converse is also true.

\medskip 
Given a discrete vector field \(V\) over a simplicial complex \(K\), a \textbf{\(V\)-path} is a sequence of cells 

\[\tau_0^i < \sigma_0^{i + 1} > \tau_1^i < \sigma_1^{i + 1} > \dots > \tau_{r - 1}^i < \sigma_{r - 1}^{i + 1} > \tau_r^i\]

\medskip
where every pair \((\tau_j^i, \sigma_j^{i + 1})\) belongs to \(V\) and \(\tau_j^i \neq \tau_{j + 1}^i\). If \(r \geq 2\) and \(\tau_0^i = \tau_1^i\) we say that the path is \textbf{closed}.

\begin{theorem}\label{closed paths}
    Let \(V\) be a discrete vector field over \(K\). Then there exists a dmf \(f : K \to \R\) such that \(V = -\nabla f\) if and only if there are no closed \(V\)-paths.
\end{theorem}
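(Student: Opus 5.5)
For the forward direction, I will assume \(V = -\nabla f\) and that a closed \(V\)-path exists, and derive a contradiction. The first step is the standard lemma that for a dmf \(f\), at most one of the two conditions in the definition can be an equality or inversion at a given cell. Concretely, if \(\tau^i < \sigma^{i+1}\) with \(f(\tau)\ge f(\sigma)\), then \(\sigma\) is not the face of any \(\rho^{i+2}\) with \(f(\rho)\le f(\sigma)\), and \(\tau\) has no facet \(\nu^{i-1}\) with \(f(\nu)\ge f(\tau)\). I would prove this in the usual way. Suppose \(\tau<\sigma<\rho\) with both inversions. Then \(\rho\) has a second face \(\sigma'\) containing \(\tau\) (the diamond property of simplicial complexes). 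Conditions (1) and (2) at \(\tau\) and at \(\rho\) then force \(f(\tau)<f(\sigma')<f(\rho)\), while the inversions give \(f(\rho)\le f(\sigma)\le f(\tau)\), which is a contradiction. This lemma guarantees that \(V\) is a genuine discrete vector field.

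The second step uses it on a \(V\)-path \(\tau_0<\sigma_0>\tau_1<\cdots\). Since \((\tau_j,\sigma_j)\in V\), we have \(f(\tau_j)\ge f(\sigma_j)\). The cell \(\tau_{j+1}\ne\tau_j\) is another facet of \(\sigma_j\). By condition (2) at \(\sigma_j\), only one facet can satisfy \(f\ge f(\sigma_j)\), so \(f(\tau_{j+1})<f(\sigma_j)\le f(\tau_j)\). Thus \(f\) strictly decreases along \(\tau_0,\tau_1,\dots\), so no path can return to \(\tau_0\), and closed paths cannot exist.

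For the converse, I will assume there are no closed \(V\)-paths and construct \(f\). I would build a directed graph \(G\) on the cells of \(K\), the modified Hasse diagram. For each cover relation \(\tau^i<\sigma^{i+1}\), orient the edge \(\sigma\to\tau\) (downward) if \((\tau,\sigma)\notin V\), and \(\tau\to\sigma\) (upward) if \((\tau,\sigma)\in V\). The key claim is that \(G\) is acyclic. Given that, take a topological order and set \(f\) to be decreasing along edges, for example \(f(c)\) equal to the length of the longest directed path ending at \(c\), negated. Then \(f(\tau)>f(\sigma)\) for every downward edge \(\sigma\to\tau\). This gives \(f(\sigma)>f(\tau)\) for faces not matched, and \(f(\tau)<f(\sigma)\)... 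I need to fix orientations so that the resulting \(f\) satisfies \(f\) decreasing along arrows with \(f(\text{tail})>f(\text{head})\). The convention I will use is to reverse all edges and make \(f\) increase along them: unmatched pairs get \(f(\tau)<f(\sigma)\), and matched pairs get \(f(\tau)\ge f(\sigma)\). Each cell lies in at most one pair of \(V\), so the inequalities in (1) and (2) have at most one exception each, \(f\) is a dmf, and \(-\nabla f=V\).

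The main obstacle is acyclicity of \(G\). A directed cycle alternates down and up edges, and each up edge is a matched pair. Since each cell is matched at most once, two up edges cannot be consecutive. I would argue that a cycle must stay within two consecutive dimensions \(i,i+1\). A down-path can descend several dimensions, but to come back up it needs an up edge from dimension \(j\) to \(j+1\) followed by a down edge. Taking the maximal dimension the cycle reaches, every visit there is entered by an up edge \(\tau^{i}\to\sigma^{i+1}\) and left by a down edge to some \(\tau'^i\ne\tau\). At the lowest dimension reached, the cycle must similarly go up immediately, so the minimum dimension is \(\ge i\). Hence the cycle lives in dimensions \(i,i+1\) and is exactly a closed \(V\)-path, contradicting the hypothesis. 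I expect the care needed here to be in handling these dimension-extremum arguments cleanly.
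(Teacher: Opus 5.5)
The paper does not prove this statement. It is Forman's theorem, quoted from the literature, so there is no in-paper argument to compare with. Your proof is correct and is one of the standard ones:
\begin{itemize}
\item for the forward direction, $f$ strictly decreases along the $\tau_j$, by condition (2) at $\sigma_j$;
\item for the converse, you show the modified Hasse diagram is acyclic and then take a linear extension (Chari's formulation).
\end{itemize}

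A few points would tidy the write-up.
\begin{enumerate}
\item The paper's definition of a closed path literally says $\tau_0=\tau_1$. That is a typo, since it is incompatible with $\tau_j\neq\tau_{j+1}$. Your reading, $\tau_r=\tau_0$, is the intended one. The requirement $r\ge 2$ is automatic in both directions because $\tau_1\neq\tau_0$.
\item Your preliminary lemma is not needed for the forward direction, which uses only condition (2) at $\sigma_j$. What the lemma actually justifies is the paper's remark that $-\nabla f$ is a discrete vector field.
\item Fix one orientation convention in the converse. Orient matched edges up and unmatched edges down, and make $f$ strictly decreasing along arrows. This gives $f(\tau)>f(\sigma)$ on matched pairs and $f(\tau)<f(\sigma)$ on unmatched ones, which is exactly what is needed. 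Your intermediate sentence ``$f(\tau)>f(\sigma)$ for every downward edge'' has the inequality reversed.
\item Your dimension-extremum argument is loose. Knowing that the cycle must go up immediately at its minimum dimension does not by itself bound that minimum below by $i$. The real reason is that after the cycle drops to dimension $j$, it can never again exceed $j+1$. A cleaner argument runs as follows:
\begin{itemize}
\item Two up edges cannot be consecutive, since the middle cell would lie in two pairs.
\item A directed cycle of length $L$ has equally many up and down edges, namely $L/2$ of each.
\item A cyclic sequence of length $L$ with $L/2$ up-steps, no two of them adjacent, must alternate strictly.
\end{itemize}
Hence the cycle lives in dimensions $i,i+1$. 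Each down edge $\sigma_j\to\tau_{j+1}$ is unmatched while $(\tau_j,\sigma_j)\in V$, which forces $\tau_{j+1}\neq\tau_j$. So the cycle is a closed $V$-path, as required.
\end{enumerate}
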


A discrete vector field \(V\) satisfying \(V = - \nabla f\) for some dmf \(f\) is called a \textbf{gradient vector field}. Note that the critical cells of any given dmf \(f\) are those that do not belong to any pair in \(-\nabla f\).  

A dmf \(f : K \to \mathbb{R}\) is called \textbf{optimal} if it has fewer critical cells than any other dmf. That is, if \(\sum_im_i(f) \leq \sum_im_i(g)\) for any dmf \(g : K \to \mathbb{R}\). Finding optimal dmf's turns out to be a complex task, in which the following result has been widely used.

\begin{theorem}\label{Morse Inequalities}
    Let \(f : K \to \R\) be a dmf where \(K\) is a simplicial complex of dimension \(n\), let \(\mathbb{K}\) be a field or \(\mathbb{Z}\) and \(\beta_i(K; \mathbb{K})\) the \(i\)-th Betti number of \(K\), with \(i \in \mathbb{Z}_{\geq 0}\). Then,  

    \begin{enumerate}
        \item \(m_i(f) \geq \beta_i(K; \mathbb{K})\). \bigskip 

        \item \(m_i(f) - m_{i - 1}(f) + \dots \pm m_0(f) \geq \beta_i(K; \mathbb{K}) - \beta_{i - 1}(K; \mathbb{K}) + \dots \pm \beta_0(K; \mathbb{K})\). \bigskip

        \item \(\chi(K) = m_0(f) - m_1(f) + \dots + (-1)^nm_n(f)\).
    \end{enumerate}
\end{theorem}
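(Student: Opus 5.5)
The plan is Forman's standard route: replace \(K\) by a smaller CW complex built only from the critical cells of \(f\), so that the three statements become statements about cellular chain complexes.

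\textbf{Step 1: a CW model.} For \(c \in \R\) set
\[K(c) = \bigcup_{f(\sigma) \le c} \bigcup_{\tau \le \sigma} \tau ,\]
the level subcomplex. I will prove two facts about how \(K(c)\) changes as \(c\) increases.
\begin{itemize}
\item If \([a,b]\) contains no critical value, then \(K(b)\) collapses simplicially onto \(K(a)\). The cells that enter in between come in gradient pairs \((\tau^i,\sigma^{i+1}) \in -\nabla f\). Each such \(\tau\) is a free face of \(\sigma\) at the moment it enters, so each pair gives an elementary collapse.
\item If \([a,b]\) contains exactly one critical value, belonging to a single critical cell \(\sigma^i\), then \(K(b)\) is homotopy equivalent to \(K(a)\) with one \(i\)-cell attached.
\end{itemize}
Taking \(f\) injective after a generic perturbation (which keeps the same gradient field), induction on the critical values shows that \(K\) is homotopy equivalent to a CW complex \(X\) with exactly \(m_i(f)\) cells in dimension \(i\).

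\textbf{Step 2: the Betti number statements.} Homology is a homotopy invariant, so \(H_*(K;\mathbb K) \cong H_*(X;\mathbb K)\), and the latter is computed by the cellular chain complex
\[C_i = \mathbb K^{m_i(f)} .\]
Write \(Z_i\) and \(B_i\) for the cycles and boundaries. Then \(\beta_i = \operatorname{rank} Z_i - \operatorname{rank} B_i\) and \(m_i = \operatorname{rank} Z_i + \operatorname{rank} B_{i-1}\).
\begin{itemize}
\item Part 1 is immediate, since \(\beta_i \le \operatorname{rank} Z_i \le m_i\).
\item For part 2, the alternating sum telescopes:
\[\sum_{j\le i}(-1)^{i-j}(m_j-\beta_j) = \operatorname{rank} B_i \ge 0 .\]
\item Part 3 is the same identity taken at \(i=n\), where \(B_n = 0\), combined with \(\chi(K)=\chi(X)=\sum_j (-1)^j m_j(f)\).
\end{itemize}
For \(\mathbb K=\Z\), "rank" means the rank of a free abelian group. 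The Betti numbers then ignore torsion, so the same rank computation applies, because subgroups of free abelian groups are free.

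\textbf{Main obstacle.} The hard step is the attachment claim in Step 1. When the critical cell \(\sigma^i\) enters, its boundary must already lie in \(K(a)\). This follows from criticality: every face \(\tau < \sigma\) has \(f(\tau) < f(\sigma)\). However, the other cells entering in the same interval must be arranged so that they collapse away. I would handle this by perturbing \(f\) so that every value interval contains exactly one critical cell or one gradient pair, and then checking the free-face condition for each pair using the two defining inequalities of a dmf.
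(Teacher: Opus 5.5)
The paper gives no proof of this theorem; it quotes it as background from Forman. Your proposal is Forman's standard argument: level subcomplexes, collapses across regular values, one cell attachment per critical value, then the rank identities in the cellular chain complex of the resulting CW complex. It is correct.

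The details you defer both follow by induction on codimension:
\begin{enumerate}
\item If \(\tau<\rho\) and \(f(\rho)\le f(\tau)\), then \(\tau\) is paired upward with some \(\sigma\) satisfying \(f(\sigma)\le f(\rho)\). The induction step uses that \(\rho\) has at least two facets containing \(\tau\), and at most one of them has value \(\ge f(\rho)\).
\item Every face of a critical cell, not just every facet, has strictly smaller value. The induction step uses the cofacet condition on the face.
\end{enumerate}
Fact 1 gives the free-face claim for each gradient pair, and fact 2 shows the boundary of a critical cell is already present when it enters.

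One thing to tighten: the perturbation to an injective \(f\) must resolve each tie \(f(\tau)=f(\sigma)\) of a gradient pair as \(f(\tau)>f(\sigma)\). A fully arbitrary generic perturbation could break the tie the other way, unpair \((\tau,\sigma)\), and create two new critical cells.
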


A dmf is called \(\mathbb{K}\)-\textbf{perfect} if \(m_i(f) = \beta_i(K; \mathbb{K})\) for all \(i\). By Part 1 of Theorem \ref{Morse Inequalities} it is clear that if a dmf is \(\mathbb{K}\)-perfect for some \(\mathbb{K}\), then it is also optimal. However, the converse is not necessarily true. A counterexample is presented in Figure 2 of \cite{Perfect}.

As we mentioned in the Introduction, the existence of optimal dmf's has been widely studied in the literature, and there are well-established results for simplicial complexes of dimension 1 and 2. 

In the case of 1-dimensional complexes, we have a really general theorem, whose proof we present, as it will be relevant for our work. 

\begin{theorem}\label{Perfect in Graphs}
    Let \(G\) be a connected graph and \(\mathbb{K}\) a field or \(\mathbb{Z}\). Then \(G\) admits a \(\mathbb{K}\)-perfect dmf.
\end{theorem}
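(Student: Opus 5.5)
Take a spanning tree \(T\) of \(G\), fix a root vertex \(v_0\), and build a gradient vector field \(V\) that collapses \(T\) onto \(v_0\). The critical cells will then be exactly \(v_0\) and the edges of \(G\) not in \(T\), and we will check that their counts are \(\beta_0\) and \(\beta_1\).

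\emph{Step 1 (the matching).} Since \(G\) is connected, it has a spanning tree \(T\). For each vertex \(v \neq v_0\) there is a unique path in \(T\) from \(v\) to \(v_0\); let \(e_v\) be the first edge of that path. Define
\[V = \{(v, e_v) : v \in G,\ v \neq v_0\}.\]
The map \(v \mapsto e_v\) is a bijection from the non-root vertices onto the edges of \(T\): each tree edge \(e_v\) is determined by its endpoint farther from \(v_0\). Hence every cell appears in at most one pair, and \(V\) is a discrete vector field.

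\emph{Step 2 (no closed \(V\)-paths).} In a \(V\)-path \(v_0' < e_{v_0'} > v_1' < e_{v_1'} > \dots\), each \(v_{j+1}'\) is the other endpoint of \(e_{v_j'}\), so it is the parent of \(v_j'\) in \(T\). Its distance in \(T\) to \(v_0\) is therefore strictly smaller. Distance strictly decreases along the path, so no vertex can repeat and there are no closed \(V\)-paths. By Theorem \ref{closed paths}, there is a dmf \(f\) with \(-\nabla f = V\).

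\emph{Step 3 (counting).} The critical cells of \(f\) are the cells not appearing in any pair of \(V\). These are the root \(v_0\) and the \(|E| - |T_E| = |E| - |V_G| + 1\) edges outside \(T\). Thus \(m_0(f) = 1 = \beta_0(G;\mathbb{K})\), since \(G\) is connected. By Theorem \ref{Morse Inequalities}(3),
\[\chi(G) = |V_G| - |E| = m_0(f) - m_1(f).\]
For a graph the homology is free in every coefficient ring: \(H_0 \cong \mathbb{K}\), and \(H_1\) is the kernel of the boundary map \(\partial_1 : C_1 \to C_0\), which is free. So \(\chi(G) = \beta_0 - \beta_1\) with \(\mathbb{K}\)-coefficients, including \(\mathbb{K} = \Z\). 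It follows that \(m_1(f) = \beta_1(G;\mathbb{K})\), and \(m_i(f) = 0 = \beta_i\) for \(i \geq 2\). Hence \(f\) is \(\mathbb{K}\)-perfect.

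The only point needing care is the acyclicity check in Step 2 together with the bijection in Step 1. Both follow from the orientation of the tree edges toward the root, so I expect no genuine obstacle. The remaining independence from \(\mathbb{K}\) rests on the absence of torsion in the homology of a 1-dimensional complex.
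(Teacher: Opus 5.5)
Your proposal is correct and is essentially the paper's proof: you take a spanning tree \(T\), fix a root, and pair each tree edge with its endpoint farther from the root, so the critical cells are the root and the \(|E|-|V_G|+1\) non-tree edges. You are more explicit than the paper on two points it only asserts: acyclicity (distance to the root strictly decreases along a \(V\)-path), and \(m_1(f)=\beta_1(G;\mathbb{K})\), which you derive from the Euler characteristic instead of quoting \(\beta_1 = |E|-|V_G|+1\) as well known (your freeness remark is harmless, though additivity of rank alone already gives \(\chi=\beta_0-\beta_1\) over \(\mathbb{Z}\) and any field).
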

\begin{proof}
    Let \(n\) and \(m\) be the number of vertices and edges of \(G\), respectively, where \(m \geq n - 1\). It is a well-known fact that \(\beta_1(G; \mathbb{K}) = m - n + 1\), which is equal to the number of edges that remain after removing a spanning tree from \(G\). Moreover, \(\beta_0(G; \mathbb{K}) = 1\), since \(G\) is connected. With this in mind, we do the following:
    \begin{enumerate}
        \item Let \(T\) be a spanning tree of \(G\).
        \item Choose any vertex \(v\) of \(T\) as its root.
        \item Consider the set \(V\) of all the pairs \((w, e)\) where \(e\) is an edge of \(T\) and \(w\) is the vertex of \(e\) farthest from \(v\).
    \end{enumerate}
    Note that \(V\) is a discrete vector field, since two different edges cannot share the same vertex \(w\) as the farthest from \(v\). If this were the case, we would have multiple simple paths from \(v\) to \(w\), implying that \(T\) has a cycle, which is a contradiction.

    Moreover, by construction all edges of \(T\) belong to a pair in \(V\), and there are no closed \(V\)-paths. Hence, the dmf associated with \(V\) has \(1\) critical vertex (the root \(v\)) and \(m - n + 1\) critical edges, making it \(\mathbb{K}\)-perfect. 
\end{proof}

For 2-dimensional simplicial complexes, things get more complicated. Theorem \ref{Perfect in Graphs} can be extended to the case where we have a 2-dimensional simplicial complex that collapses to a graph, giving us the following result \cite{Perfect}. 

\begin{theorem}\label{Perfect in Collapsible for Z}
    Any 2-dimensional simplicial complex that collapses to a graph admits a \(\mathbb{Z}\)-perfect dmf. 
\end{theorem}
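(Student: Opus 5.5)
The approach is to turn the collapse onto a graph into a gradient vector field, and then glue it to the perfect field on the graph supplied by Theorem \ref{Perfect in Graphs}. Let \(K\) be a 2-dimensional simplicial complex and suppose \(K \searrow G\) with \(G\) a graph, through a sequence of elementary collapses
\[
K = K_0 \searrow K_1 \searrow \dots \searrow K_s = G .
\]
Each elementary collapse removes a free pair \((\tau, \sigma)\), where \(\tau\) is a face of \(\sigma\) and \(\sigma\) is the only coface of \(\tau\) in the current complex. We may assume \(K\) is connected; otherwise we apply the argument to each component separately.

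The plan has three steps.

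\begin{enumerate}
\item \emph{Field from the collapse.} Let \(V_1\) be the set of all pairs \((\tau,\sigma)\) removed during the collapse. Every cell of \(K \setminus G\) appears in exactly one pair, so \(V_1\) is a discrete vector field.

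\item \emph{Field on the graph.} Since \(G\) is homotopy equivalent to \(K\), it is connected. Let \(V_2\) be the field built in the proof of Theorem \ref{Perfect in Graphs} from a spanning tree of \(G\). Set \(V = V_1 \cup V_2\). This is still a discrete vector field, because \(V_2\) only involves cells of \(G\), which are disjoint from the cells of \(K\setminus G\).

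\item \emph{Acyclicity and counting.} We check that \(V\) has no closed \(V\)-paths and then invoke Theorem \ref{closed paths}.
\end{enumerate}

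For acyclicity, use the order of removal. Suppose a \(V\)-path \(\tau_j < \sigma_j > \tau_{j+1}\) has \((\tau_j,\sigma_j)\in V_1\) removed at step \(k\). Then \(\tau_{j+1}\ne\tau_j\) is a face of \(\sigma_j\), so it was still present at step \(k\). Hence either \(\tau_{j+1}\) lies in \(G\), or it is removed at a strictly later step than \(k\). So along any \(V\)-path the removal step strictly increases while the path stays in \(K\setminus G\). Once the path reaches a cell of \(G\), it can only continue with pairs of \(V_2\): any pair continuing from that cell must contain it, and pairs of \(V_1\) contain no cell of \(G\). Pairs of \(V_2\) already have no closed paths, since that field came from a rooted tree. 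Therefore there are no closed \(V\)-paths.

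For the count, the critical cells of \(V\) are exactly the critical cells of \(V_2\) in \(G\): one vertex and \(\beta_1(G;\mathbb{Z})\) edges, with no critical 2-cells. Collapses preserve homology, so \(\beta_i(K;\mathbb{Z})=\beta_i(G;\mathbb{Z})\) for all \(i\). In particular \(\beta_2(K;\mathbb{Z})=0\), and the homology of a graph is free, so no torsion issues arise. Hence the function obtained from Theorem \ref{closed paths} satisfies \(m_i=\beta_i(K;\mathbb{Z})\) for every \(i\), i.e.\ it is \(\mathbb{Z}\)-perfect.

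The main obstacle I expect is the acyclicity step: showing carefully that paths cannot return from \(G\) into the collapsed part, and that the step indices are strictly monotone across dimensions 0–1 and 1–2. I would handle both by the removal-time argument above, applied separately in each dimension.
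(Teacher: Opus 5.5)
Your proof is correct and takes the route the paper points to. The paper gives no argument of its own here: it cites Ayala et al.\ \cite{Perfect} and remarks that Theorem~\ref{Perfect in Graphs} extends to complexes collapsing onto a graph. Your construction is exactly that extension made explicit: pair the cells of \(K\setminus G\) via the collapse, use the rooted spanning-tree field on \(G\), rule out closed paths by strictly increasing removal times, and conclude from the invariance of integral homology under collapse. One step deserves a sentence of its own: once a \(V\)-path enters \(G\) it never leaves, because the pairs of \(V_2\) lie in \(G\) and \(G\) is closed under taking faces.
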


By the Universal Coefficient Theorem, for any field \(\mathbb{F}\) we have that \(H_i(K; \mathbb{F}) \cong H_i(K; \mathbb{Z}) \otimes \mathbb{F} \oplus \mbox{Tor}_1(H_{i - 1}(K; \mathbb{Z}), \mathbb{F})\), from where \(\beta_i(K; \mathbb{F}) \geq \beta_i(K; \mathbb{Z})\). This implies that a \(\mathbb{Z}\)-perfect dmf is also a \(\mathbb{F}\)-perfect dmf for any field. Hence, we have a more general result.

\begin{theorem}\label{Perfect in Collapsible for K}
    Any 2-dimensional simplicial complex that collapses to a graph admits a \(\mathbb{K}\)-perfect dmf, where \(\mathbb{K}\) is a field or \(\mathbb{Z}\). 
\end{theorem}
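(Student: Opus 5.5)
The plan is to deduce Theorem \ref{Perfect in Collapsible for K} directly from Theorem \ref{Perfect in Collapsible for Z}, using the Universal Coefficient Theorem as in the remark that precedes the statement. The case \(\mathbb{K} = \mathbb{Z}\) is exactly Theorem \ref{Perfect in Collapsible for Z}, so we may assume \(\mathbb{K} = \mathbb{F}\) is a field. Let \(K\) be a 2-dimensional simplicial complex that collapses to a graph \(G\). By Theorem \ref{Perfect in Collapsible for Z} we fix a dmf \(f\) with \(m_i(f) = \beta_i(K;\mathbb{Z})\) for all \(i\). We will show that this same \(f\) is \(\mathbb{F}\)-perfect.

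The key step is to show that \(\beta_i(K;\mathbb{F}) = \beta_i(K;\mathbb{Z})\) for every \(i\). The inequality \(\beta_i(K;\mathbb{F}) \geq \beta_i(K;\mathbb{Z})\) alone does not suffice, since it points the wrong way: Theorem \ref{Morse Inequalities} gives \(m_i(f) \geq \beta_i(K;\mathbb{F}) \geq \beta_i(K;\mathbb{Z}) = m_i(f)\). In fact, this chain already forces equality everywhere, so the inequality suffices after all. To be safe, we will also check equality directly. Collapses are homotopy equivalences, so \(H_*(K;\mathbb{Z}) \cong H_*(G;\mathbb{Z})\). The homology of a graph is free abelian, so every \(\mathrm{Tor}_1(H_{i-1}(K;\mathbb{Z}),\mathbb{F})\) vanishes. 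The Universal Coefficient Theorem then gives \(H_i(K;\mathbb{F}) \cong H_i(K;\mathbb{Z}) \otimes \mathbb{F}\), whose dimension equals the rank of \(H_i(K;\mathbb{Z})\). Hence \(\beta_i(K;\mathbb{F}) = \beta_i(K;\mathbb{Z})\).

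With this equality, \(m_i(f) = \beta_i(K;\mathbb{Z}) = \beta_i(K;\mathbb{F})\) for all \(i\), so \(f\) is \(\mathbb{F}\)-perfect. The only delicate point is the convention for \(\beta_i(K;\mathbb{Z})\): it must mean the rank of the free part, not the minimal number of generators. Under the latter convention, torsion would change the count, and we would need the freeness of \(H_*(G)\) to rule it out. We address both conventions by noting that the graph \(G\) has torsion-free homology. We therefore expect no real obstacle. The main care needed is to keep the direction of the inequalities straight and to make sure the Morse inequalities of Theorem \ref{Morse Inequalities} are applied with coefficients in \(\mathbb{F}\).
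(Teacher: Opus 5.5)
Your proposal is correct and follows the same route as the paper. The paper also deduces the field case from the \(\mathbb{Z}\)-perfect function via the Universal Coefficient Theorem inequality \(\beta_i(K;\mathbb{F})\geq\beta_i(K;\mathbb{Z})\), which together with \(m_i(f)\geq\beta_i(K;\mathbb{F})\) forces equality throughout. Your extra check, that the homology of the graph is free so the Betti numbers actually coincide, is redundant but harmless, and as you noticed yourself, your initial concern that the inequality points the wrong way was unfounded.
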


Lewiner et al. \cite{Optimal} showed that there exist optimal dmf's for any 2-manifold and gave an explicit construction of them. Moreover, Ayala et al. \cite{Perfect} proved the following theorem.

\begin{theorem}\label{Perfect in manifolds}
    Any compact and connected 2-manifold admits a \(\mathbb{Z}_2\)-perfect dmf.  
\end{theorem}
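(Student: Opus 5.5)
The plan is a direct construction: remove a single triangle, collapse the rest onto a graph, and apply the spanning-tree argument of Theorem \ref{Perfect in Graphs}. Let \(M\) be a triangulation of a compact connected 2-manifold, first assumed closed. Over \(\mathbb{Z}_2\) every such surface has \(\beta_0 = 1\) and \(\beta_2 = 1\), the latter because the fundamental class exists mod 2 even in the non-orientable case. Theorem \ref{Morse Inequalities}(3) then forces \(\beta_1 = 2 - \chi(M)\).

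It therefore suffices to find a gradient vector field with exactly one critical vertex, one critical triangle and \(2-\chi(M)\) critical edges. By the Euler characteristic relation, having one critical vertex and one critical triangle already pins the edge count at \(2-\chi(M)\). So the goal reduces to making exactly one vertex and one triangle critical.

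The construction proceeds in three steps.

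\begin{enumerate}
\item Fix a triangle \(t_0\) and declare it critical. Form the dual graph \(M^*\): one node per triangle, with two nodes adjacent when the triangles share an edge. Connectedness of \(M\), together with the manifold condition that each edge lies in exactly two triangles, makes \(M^*\) connected.
\item Choose a spanning tree \(T^*\) of \(M^*\) rooted at \(t_0\), mirroring the proof of Theorem \ref{Perfect in Graphs}. For every tree edge \(e^*\), dual to the primal edge \(e\), pair \(e\) with whichever of its two triangles is farther from the root. This matches every triangle except \(t_0\). The arrows point away from the root along a tree, so the argument used in Theorem \ref{Perfect in Graphs} rules out closed \(V\)-paths in dimensions 1--2. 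Geometrically, \(M \setminus \mathrm{int}(t_0)\) collapses onto the subcomplex \(G\) formed by all vertices together with the edges not dual to \(T^*\).
\item The graph \(G\) is connected, since it contains the whole 1-skeleton minus a set of edges whose removal does not disconnect the vertices. Apply Theorem \ref{Perfect in Graphs} to \(G\): take a spanning tree of \(G\) and pair vertices with edges toward a root. This leaves one critical vertex, and the remaining edges of \(G\) are critical.
\end{enumerate}

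The combined field \(V\) is acyclic. Closed \(V\)-paths cannot mix dimensions, and each layer is acyclic by the tree arguments above. Theorem \ref{closed paths} then produces a dmf \(f\) with \(m_0 = 1\) and \(m_2 = 1\). The Euler characteristic gives \(m_1 = 2 - \chi(M) = \beta_1(M;\mathbb{Z}_2)\), so \(f\) is \(\mathbb{Z}_2\)-perfect.

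In the bordered case, \(\beta_2 = 0\). Here root the dual tree at a virtual outer node adjacent to the triangles containing boundary edges, so that a boundary edge is paired with its unique triangle. Then no triangle is critical, \(M\) collapses onto a graph, and Theorem \ref{Perfect in Collapsible for K} applies directly.

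The main obstacle is step 3: verifying that the edges not dual to \(T^*\), with all vertices, form a connected graph, and that the 2--1 pairing really is a sequence of elementary collapses. I would handle this by induction on the tree, peeling leaves of \(T^*\). Each leaf triangle has a free edge, namely its tree edge, so it can be removed by an elementary collapse. The vertex set is never removed, so connectivity is inherited from the connectedness of \(M\).
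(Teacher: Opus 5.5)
The paper gives no proof of this statement; it is quoted from Ayala et al.\ \cite{Perfect}. Your argument is the standard dual-tree/primal-tree construction of Lewiner et al. That construction is also the template the paper adapts in Section~\ref{sec:main-result}: there the primal trees on internal vertices are built first and the dual tree uses the leftover edges, while you follow the original order with the dual tree first. The architecture is correct. The pairing is a matching, and depth in \(T^*\) strictly increases along any \(1\)--\(2\) \(V\)-path. The vertex--edge pairs use edges disjoint from the dual-tree edges. The count \(m_0=m_2=1\), \(m_1=2-\chi(M)=\beta_1(M;\mathbb{Z}_2)\) is right, and so is the bordered case with a virtual root.

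Two justifications are wrong as written and need repair.

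\textbf{Connectivity of \(M^*\).} It does not follow from ``\(M\) connected and every edge in exactly two triangles.'' Two triangulated spheres wedged at a vertex satisfy both conditions, yet their dual graph is disconnected. What you need is the link condition: the triangles around each vertex form a single cycle (an arc at a boundary vertex) in the dual graph. Connectivity of the \(1\)-skeleton then chains these fans together.

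\textbf{Collapse order in your ``main obstacle'' paragraph.} Leaf-peeling runs in the wrong direction. In \(M\setminus\mathrm{int}(t_0)\), the tree edge of a leaf of \(T^*\) is still shared with its parent triangle, so it is not free unless that parent is \(t_0\). For example, take the boundary of a tetrahedron with a path-shaped \(T^*\). The collapse must instead go outward from the root, in order of increasing depth. When a triangle is reached, its parent (or \(\mathrm{int}(t_0)\)) has already been removed, so its parent edge lies in exactly one remaining triangle. Equivalently, your leaf-peeling is valid read as an expansion \(G\nearrow M\setminus\mathrm{int}(t_0)\): when a current leaf is added, all of its edges except its tree edge are already present.

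Either way, \(G\) is a deformation retract of the connected space \(M\setminus\mathrm{int}(t_0)\), or of \(M\) itself in the bordered case, where you collapse inward from the boundary. This is what actually proves \(G\) connected. Your first justification in step~3 (``removal does not disconnect the vertices'') only restates the claim.
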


Finally, it is worth mentioning that Ayala et al. also noted in \cite{Perfect} the following result, applying to \textit{pseudo-projective} spaces, which are obtained by gluing a 2-cell to \(\mathbb{S}^1\) by a map of degree \(p\). These spaces turn out to be examples of our stratifolds. 

\begin{theorem}\label{Perfect in pseudo-projective}
    Any pseudo-projective space admits a \(\mathbb{Z}_p\)-perfect dmf for some prime number \(p\). 
\end{theorem}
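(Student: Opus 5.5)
We work with a triangulation \(K\) of the pseudo-projective space \(X_p = \mathbb{S}^1 \cup_{p} D^2\), and take \(p \geq 2\); the case \(p=1\) is a disk, which is collapsible and is covered by Theorem \ref{Perfect in Collapsible for K}. The first step is to compute the homology. Cellular homology of the CW structure with one 0-cell, one 1-cell \(a\) and one 2-cell \(D\), with \(\partial D = p\,a\), gives \(H_1(X_p;\Z) \cong \Z_p\) and \(H_2(X_p;\Z)=0\). With coefficients in \(\Z_q\) for a prime \(q\), the boundary map becomes multiplication by \(p\) in \(\Z_q\). Choose \(q\) to be any prime dividing \(p\); then this map is zero, so \(\beta_0=\beta_1=\beta_2=1\) over \(\Z_q\). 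The claim to prove is therefore that \(K\) admits a dmf with exactly one critical cell in each of the dimensions 0, 1 and 2. This is consistent with Theorem \ref{Morse Inequalities}(3), since \(\chi = 1\). The statement is phrased for "some prime \(p\)", so this \(q\) is the prime I will use.

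The second step is to construct the gradient field. Let \(C \subset K\) be the subcomplex triangulating the circle, and let \(L\) be the 2-dimensional part triangulating the disk; the attaching map wraps \(\partial L\) \(p\) times around \(C\). Choose one 2-simplex \(\sigma\) of \(L\) that does not meet the boundary of the disk, and declare it critical. I then claim that \(K \setminus \{\sigma\}\) collapses onto \(C\). The reason is that a punctured disk collapses onto its boundary circle, and this collapse is simplicial in the interior of the disk, so it descends to \(K\). To see this, use a shelling or a spanning tree of the dual graph of \(L\) rooted at \(\sigma\). Pair each other triangle with the edge crossed by its dual tree edge toward the root. The remaining interior edges and interior vertices are handled with a spanning forest argument, as in the proof of Theorem \ref{Perfect in Graphs}, with every tree rooted at the image of the boundary. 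Finally, on the circle \(C\) apply Theorem \ref{Perfect in Graphs}: this leaves one critical vertex and one critical edge. Together this gives a combined vector field with critical cells \(v\), \(e\) and \(\sigma\).

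The third step is acyclicity, which by Theorem \ref{closed paths} means showing there are no closed \(V\)-paths. In the collapse portion, every pair is a sequence of elementary collapses, ordered by distance in the dual tree from \(\sigma\). Since the pairs are removed in order, any \(V\)-path must be strictly monotone in this removal order, so closed paths cannot occur. On \(C\) the field is acyclic by the proof of Theorem \ref{Perfect in Graphs}. The two parts cannot create a cycle between them, because the pairs on \(C\) only involve cells of \(C\), and every collapse in \(L\) is performed before we reach \(C\).

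I expect the main obstacle to be the second step. The triangulation of \(X_p\) is not a manifold near \(C\): each edge of \(C\) lies in \(p\) triangles. Because of this, one has to check carefully that the punctured disk really collapses onto \(C\) in the quotient, and not merely in the disk before identification. My first approach would be to perform the collapses upstairs in the disk, keeping the boundary edges fixed. The quotient map is injective on the open disk, so each elementary collapse of an interior pair remains a valid free-face collapse in \(K\). This works as long as the free face in question is interior, which holds because we never collapse boundary edges.
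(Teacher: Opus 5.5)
The paper does not prove this statement; it quotes it from Ayala et al. Within the paper it is recovered in two ways. For \(p\geq 3\) it is the one-disk, one-circle case of Theorem \ref{Perfect 1 and 3}: a disk glued with weight \(p\neq\pm1\) is a twisted stratifold of type 1, and any prime \(q\mid p\) works. For \(p=2\) it follows from Theorem \ref{Perfect in manifolds}.

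Your argument is correct and amounts to that algorithm with the two spanning trees chosen in the opposite order. You fix a spanning tree of the dual graph rooted at \(\sigma\) first, and let the leftover non-boundary edges absorb the interior vertices. The paper first takes trees on the internal vertices, each joined to the boundary by one bridge edge. It then shows the remaining dual graph is a tree, using the lemma that removing such a tree leaves the disk connected plus the Euler count of Theorem \ref{dual_is_tree}.

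Your order has a real advantage. An edge joining two boundary vertices separates the disk, so the dual tree must cross it. You therefore never have to cut along crossing edges and reconnect the pieces as in step 1.2.

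The price is a step you pass over in one sentence: that the non-boundary edges not crossed by the dual tree form a forest containing every interior vertex, with exactly one boundary vertex in each component. This is tree--cotree duality, the exact counterpart of Theorem \ref{dual_is_tree}, and it needs its short proof:
\begin{itemize}
\item A cycle of leftover edges, or a path of leftover edges between two boundary vertices, splits \(D\) into two regions each containing a triangle, since each such edge lies in two triangles, one on each side. Some dual tree edge would then cross it, which is a contradiction.
\item Euler's formula for \(D\) shows that exactly \(V_I\) non-boundary edges are left over. Together with the previous point, this forces every component to contain exactly one boundary vertex.
\end{itemize}
Your treatment of the quotient (injectivity on the open disk, so interior collapses descend to \(K\)) is what Theorem \ref{Simplicial Decomposition} supplies in the paper. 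Your acyclicity argument is fine.

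Two small corrections.
\begin{itemize}
\item You do not need \(\sigma\) to avoid \(\partial D\), and it is not clear that such a triangle always exists. Any triangle can serve as the root, with the same proof.
\item For \(p=2\) the attaching circle is not intrinsic to \(\mathbb{R}P^2\), so an arbitrary triangulation need not contain it as a subcomplex. (For \(p\geq 3\) it is the non-manifold locus, so it is automatically a subcomplex.) For \(p=2\), either cite Theorem \ref{Perfect in manifolds}, or cut \(K\) along a shortest non-contractible edge cycle. Such a cycle is simple and one-sided, so cutting yields a disk whose boundary double covers it.
\end{itemize}
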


\section{Stratifolds}
As defined in \cite{estratificies}, a 2-stratifold is a compact, connected Hausdorff space \(X\) together with a filtration \(\varnothing \subset X_1 \subset X_2 = X\) such that \(X_1\) is a closed (not necessarily connected) 1-manifold, each point \(x \in X_1\) has a neighborhood homeomorphic to \(\R \times CL\), where \(CL\) is the open cone of some finite set \(L\) of cardinality strictly greater than 2, and each \(x \in X_2 \setminus X_1\) has a neighborhood homeomorphic to \(\R^2\).

In other words, we can think of a 2-stratifold \(X\) as a pair \((\mathcal{M}, \mathcal{C})\), where \(\mathcal{C} := \{c_i\}_{i \in I}\) is a collection of circles, and \(\mathcal{M} = \{\mathbb{M}_j\}_{j \in J}\) is a collection of compact surfaces with boundary, such that each boundary component is attached to a unique circle via a covering map \(g : \mathbb{S}^1 \to \mathbb{S}^1\) of some number of sheets, and for each circle, the sum of the absolute values of the degrees of all the maps associated with it is strictly greater than 2. In this work, we will restrict ourselves to \textit{finite stratifolds}; this is, stratifolds such that \(|\mathcal{M}| < \infty\) and \(|\mathcal{C}| < \infty\). From now on, we will drop the \textit{finite} adjective when talking about finite stratifolds.

Now, we will present a CW structure for 2-stratifolds. By the classification theorem of surfaces \cite{Lee}, we know that every compact, connected 2-manifold without boundary is homeomorphic to one of the following:
\begin{enumerate}
    \item A sphere.
    \item A connected sum of \(g\) tori.
    \item A connected sum of \(g\) projective planes.
\end{enumerate}

In the first case, the 2-manifold is homeomorphic to a CW complex with one 0-cell, no 1-cells, and one 2-cell. In the second case, the surface admits a polygonal presentation \cite{Lee} of the form \[\langle a_1, b_1, \dots, a_g, b_g | a_1b_1a_1^{-1}b_1^{-1} \dots a_gb_ga_g^{-1}b_g^{-1} \rangle,\] 
which corresponds to a CW complex with one 0-cell, \(2g\) 1-cells and one 2-cell. Similarly, in the third case, the surface admits the following polygonal presentation \cite{Lee}: \[\langle a_1, \dots, a_g | a_1^2 \dots a_g^2 \rangle,\]
which has one 0-cell, \(g\) 1-cells, and one 2-cell. Hence, any surface can be represented as a CW complex consisting of one 0-cell, one 2-cell, and a number of \(1\)-cells that depend on the surface. 

Now, consider a surface \(\mathbb{M}_i\) with \(k_i\) boundaries. Such a surface can be obtained by taking a surface with no boundaries and removing \(k\) disjoint 2-discs. This results in a CW complex with the same number of cells as the associated surface, plus \(k_i\) additional 1-cells, denoted \(d_{i, j}\), connecting the original 0-cell \(v_i\) to newly introduced 0-cells \(v_{i, j}\), which in turn have 1-cells \(e_{i, j}\) attached to them, representing the boundary components.

Finally, when we identify the boundary components with the circles, we effectively make \(e_{i, j} = c_{f(i, j)}^{w_{i, j}}\), where \(w_{i, j}\) is the degree of the attaching map and \(f(i, j)\) is the circle to which we are attaching the \(j\)-th boundary component of \(\mathbb{M}_i\). Moreover, we may also identify some of the \(v_{i, j}\). Hence, we will exchange \(v_{i, j}\) with \(w_{f(i, j)}\).

From the former discussion, we can conclude that 2-stratifolds are homeomorphic to a CW complex that can be viewed as the union of subcomplexes of the forms shown in Figure \ref{fig:cw} (one for each surface in \(\mathcal{M}\)), where the \(c_j\)'s are the same as those from \(\mathcal{C}\). For simplicity, we omit the labels of the \(w_{f(i, j)}\) vertices. 

\begin{figure}[h]
    \centering
    \includegraphics[scale = .75]{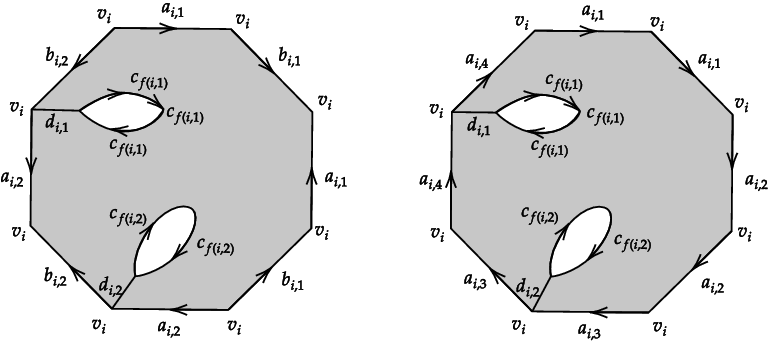}
    \caption{CW structure of the spaces in \(\mathcal{M}\) after identifications.}
    \label{fig:cw}
\end{figure}

In Figure \ref{fig:strat}, we show an example of the CW structure for a given stratifold \(X_1 = X(\mathcal{M}, \mathcal{C})\), where \(|\mathcal{C}| = 2\), while \(\mathcal{M}\) consists of two copies of \(\mathbb{T}^2 \# \mathbb{T}^2\), each one with two boundary components and a copy of \(\mathbb{R}\mathbb{P}^2\) with one boundary component.

\begin{figure}[h]
    \centering
    \includegraphics[scale = .65]{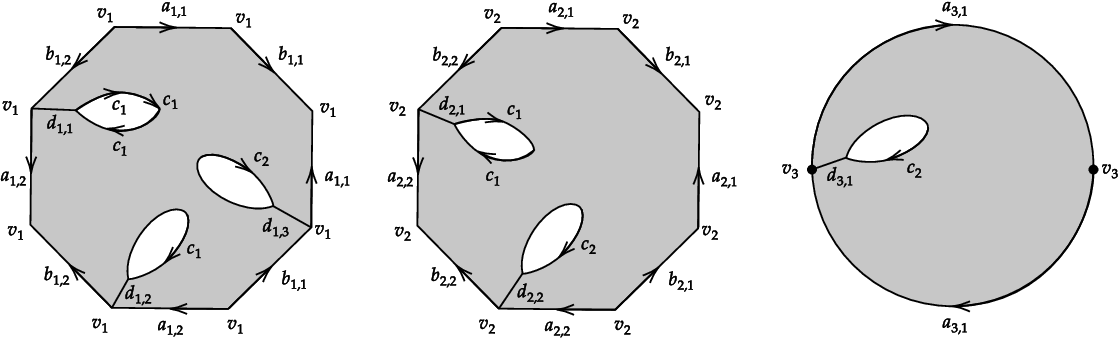}
    \caption{CW structure of a stratifold \(X_1\) with \(\mathcal{M} = \{\mathbb{T}^2 \# \mathbb{T}^2, \mathbb{T}^2 \# \mathbb{T}^2, \mathbb{R}P^2\}\).}
    \label{fig:strat}
\end{figure}

After fixing an orientation on each orientable surface \(\mathbb{M}_i \in \mathcal{M}\) and each curve  \(c_i \in \mathcal{C}\), we can associate a unique weighted bicolored multigraph \(G(X)\) to the stratifold \(X(\mathcal{M}, \mathcal{C})\) in the following way: for each \(\mathbb{M}_i \in \mathcal{M}\) we take a white vertex, and for each \(c_j \in \mathcal{C}\) we take a black vertex. Moreover, for every map that attaches \(\mathbb{M}_i\) to \(c_j\), we add an edge \(e\) between the two corresponding vertices and assign it a weight \(w(e)\) equal to the degree of the attaching map; if $\mathbb{M}_i$ is orientable, this number can be positive or negative; otherwise it is positive. We denote the set of edges that go from the vertex corresponding to \(\mathbb{M}_i\) to the vertex corresponding to \(c_j\) as \(E(i, j)\). Finally, we can label each white vertex with the genus of its corresponding surface, using negative numbers for nonorientable surfaces. This convention follows Neumann \cite{neumann}.

Note that the signs of the weights on the edges of $G(X)$ depend on the orientation given to each \(\mathbb{M}_i \in \mathcal{M}\) and each curve  \(c_i \in \mathcal{C}\). Besides those signs, everything else is determined by \(X\).

In Figure \ref{fig:graph_stratifold} we show an example of such a graph.

\begin{figure}[h]
    \centering
    \includegraphics[scale = .75]{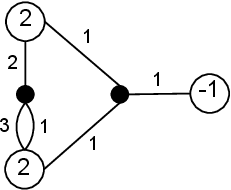}
    \caption{Graph associated with the stratifold \(X_1\) shown in \ref{fig:strat}.}
    \label{fig:graph_stratifold}
\end{figure}

\begin{theorem}\label{Homology Orientable}
    Let \(X(\mathcal{M}, \mathcal{C})\) be a 2-stratifold. If all the weights of the white vertices of \(G(X)\) are non-negative (that is, we only have oriented manifolds), and there exists a prime \(p\) such that \(p|\sum_{e \in E(i, j)}w(e)\) for all pairs \((i, j)\), then \(\beta_2(X; \Z_p) = n\), where \(n = |\mathcal{M}|\). In any other case \(\beta_2(X; \mathbb{K}) < n\), where \(\mathbb{K}\) is any field or \(\mathbb{Z}\). 
\end{theorem}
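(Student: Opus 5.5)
Since the CW structure has no 3-cells, \(H_2(X;\mathbb{K})\) is the kernel of the cellular boundary \(\partial_2 : C_2 \to C_1\). I will compute this kernel explicitly. The 2-chains are \(\sum_i a_i D_i\), where \(D_i\) is the unique 2-cell of \(\mathbb{M}_i\); there are \(n\) of them, which gives \(\beta_2 \le n\) at once. Equality \(\beta_2 = n\) holds if and only if \(\partial_2 = 0\).

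\textbf{Step 1: compute \(\partial_2 D_i\).} From the polygonal presentation of \(\mathbb{M}_i\) with the \(k_i\) discs removed, the boundary word of \(D_i\) contains three kinds of letters.
\begin{itemize}
\item The surface word: \(\prod [a_l,b_l]\) in the orientable case, \(\prod a_l^2\) in the nonorientable case.
\item The arcs \(d_{i,j}\), each appearing twice with opposite signs.
\item The boundary loops \(e_{i,j} = c_{f(i,j)}^{w_{i,j}}\).
\end{itemize}
Abelianizing, the \(d_{i,j}\) cancel. The commutators also cancel. Each \(a_l\) with \(a_l^2\) contributes \(2a_l\), and \(e_{i,j}\) contributes \(w_{i,j} c_{f(i,j)}\). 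So
\[\partial_2 D_i = \varepsilon_i + \sum_j \Big(\sum_{e\in E(i,j)} w(e)\Big) c_j,\]
where \(\varepsilon_i = 0\) if \(\mathbb{M}_i\) is orientable and \(\varepsilon_i = 2\sum_l a_{i,l}\) otherwise. The \(a_{i,l}\) are 1-cells private to \(\mathbb{M}_i\), so the differentials of different \(D_i\) involve disjoint private cells together with the shared circles \(c_j\). I must handle orientations carefully: the orientation chosen on each orientable \(\mathbb{M}_i\) and each \(c_j\) fixes the signs of the \(w(e)\). In the nonorientable case the signs of the \(w_{i,j}\) are ambiguous, but only sums modulo the relevant coefficients will matter.

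\textbf{Step 2: the forward direction.} Suppose every surface is orientable and \(p \mid \sum_{e\in E(i,j)} w(e)\) for all \((i,j)\). Then over \(\Z_p\) every \(\partial_2 D_i = 0\), so \(\beta_2(X;\Z_p) = n\).

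\textbf{Step 3: the converse, the main obstacle.} Suppose the hypothesis fails, and fix \(\mathbb{K}\). I must exhibit some \(i\) with \(\partial_2 D_i \neq 0\) in \(C_1 \otimes \mathbb{K}\), which forces \(\beta_2 < n\).
\begin{itemize}
\item \emph{Some \(\mathbb{M}_i\) is nonorientable.} Then \(\partial_2 D_i\) has coefficient \(2\) on a private cell \(a_{i,l}\). If \(\operatorname{char}\mathbb{K}\ne 2\), this is nonzero. In characteristic 2 the private part vanishes, so I need some nonzero \(\sum_{E(i,j)} w(e)\) mod 2. This is the delicate point. If none exists, the nonorientable surface behaves like a cycle mod 2.
\item \emph{All surfaces are orientable.} The failure of the hypothesis means that for every prime \(p\) some sum \(\sum_{E(i,j)} w(e)\) is not divisible by \(p\). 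Over \(\mathbb{Z}\) or \(\mathbb{Q}\), I need one nonzero sum; over \(\Z_q\), I need one sum not divisible by \(q\).
\end{itemize}

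I expect the intended reading is this. The stated condition characterizes when \emph{some} coefficient ring attains \(n\), with mod-2 nonorientable cells counted as "orientable mod 2". So I will check whether the statement needs the case \(p=2\) with nonorientable surfaces excluded, and whether \(\mathbb{K}\) should range only over characteristics where the condition fails. I would first try to prove the precise claim: \(\beta_2(X;\mathbb{K}) = n\) iff \(\partial_2 \otimes \mathbb{K} = 0\). I would then translate this into the divisibility conditions above, noting that \(\sum_{E(i,j)}w(e)\) is never zero as an integer by the stratifold degree condition. That fact settles the \(\mathbb{Z}\) and characteristic-0 cases, and reduces the finite-field cases to the divisibility statement.
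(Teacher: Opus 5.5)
Your approach is the paper's: one 2-cell $F_i$ per surface, with $\partial_2 F_i=\sum_j\bigl(\sum_{e\in E(i,j)}w(e)\bigr)c_j$ (plus $2\sum_l a_{i,l}$ when $\mathbb{M}_i$ is nonorientable). Your Step 2 is exactly the paper's forward direction.

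The step that is wrong is your closing claim that $\sum_{e\in E(i,j)}w(e)$ is never zero ``by the stratifold degree condition''. That condition only says $\sum|w(e)|>2$ over all edges at a black vertex. It says nothing about signed sums over a single $E(i,j)$.
\begin{itemize}
\item The sum is $0$ whenever $E(i,j)=\varnothing$.
\item It can also vanish when $E(i,j)\neq\varnothing$. Take an annulus with both boundary circles wrapped onto one circle $c$ with degrees $3$ and $-3$. This is a legitimate 2-stratifold with $\partial_2F=3c-3c=0$, so $\beta_2(X;\mathbb{Z})=1=n$.
\end{itemize}
You do not need the claim. In the orientable case, failure of the hypothesis means no prime divides all the sums. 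Since $0$ is divisible by every prime, some sum is a nonzero integer, which gives $\partial_2\neq 0$ over $\mathbb{Z}$ and in characteristic $0$. For characteristic $q$, the same failure applied to $p=q$ gives a sum not divisible by $q$. So no restriction on $\mathbb{K}$ is needed, and this is precisely the paper's one-line converse.

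Your suspicion about the nonorientable case is correct. State it as a fact rather than leaving it as something to ``check'': read literally, ``in any other case'' is false. A M\"obius band attached to a single circle by a degree-$4$ cover gives $\partial_2F=2a+4c\equiv 0 \pmod 2$, hence $\beta_2(X;\mathbb{Z}_2)=1=n$.

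The paper intends ``any other case'' to mean that all surfaces are orientable and no such prime exists. Its proof computes $\partial_2$ only for orientable surfaces. The nonorientable situation, including exactly this $\mathbb{Z}_2$ exception, is the separate Theorem~\ref{Homology Non Orientable}. Under that reading, your orientable bullet, repaired as above, is the complete proof.
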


\begin{proof}
    Consider the CW decomposition shown on the left of Figure \ref{fig:cw}, assume that there exists a prime \(p\) as desired, and take the chain complex \((C_*(X; \Z_p), \partial_*)\). Note that \(C_2(X; \Z_p) = \Z_p^n\). Hence, for all generators \(F_i \in C_2(X; \Z_p)\), if we denote as \(k_i\) the number of boundary components of \(\mathbb{M}_i\), we have that
    \begin{align*}
        \partial_2(F_i) &= \sum_{j = 1}^{k_i}(d_{i, j} + w_{i, j}c_{f(i, j)} - d_{i, j}) + \sum_{k = 1}^g(a_k + b_k - a_k - b_k) \\[1ex]
        &= \sum_{j = 1}^{k_i}w_{i, j}c_{f(i, j)} = \sum_{j' = 1}^{|\mathcal{C}|}\left(\sum_{e \in E(i, j')}w(e)\right)c_{j'} = 0,
    \end{align*}

    where the last equality follows from the fact that we are working over \(\Z_p\). Hence, \(\mbox{rank}(\partial_2) = 0\) and \(\mbox{dim}(\ker(\partial_2)) = n - 0 = n\). Since \(\mbox{rank}(\partial_3) = 0\), we conclude that \(\beta_2(X, \Z_p) = n\).

    Now, if there does not exist such \(p\), then for any \(\mathbb{K}\) there is at least one \(F_i\) such that \(\partial_2(F_i) \neq 0\), implying that \(\mbox{dim}(\ker(\partial_2)) < n\), from where \(\beta_2(X, \Z_p) < n\).
\end{proof}

In a similar fashion, we have the following result for the case where we have non-orientable surfaces. Note that we need to ask for the prime to be 2, as the second sum of \(\partial_2(F_i)\) does not cancel in this case, and we do not know the signs of \(w_{i, j}c_{f(i, j)}\).

\begin{theorem}\label{Homology Non Orientable}
        Let \(X(\mathcal{M}, \mathcal{C})\) be a 2-stratifold where at least one of the weights of the white vertices of \(G(X)\) is negative, and \(2|\sum_{e \in E(i, j)}w(e)\) for all pairs \((i, j)\), then \(\beta_2(X; \Z_2) = n\), where \(n = |\mathcal{M}|\). In any other case \(\beta_2(X; \mathbb{K}) < n\), where \(\mathbb{K}\) is any field or \(\mathbb{Z}\). 
\end{theorem}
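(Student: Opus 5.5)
I would mirror the proof of Theorem \ref{Homology Orientable}, using the same CW decomposition of Figure \ref{fig:cw} and the cellular chain complex \((C_*(X;\Z_2),\partial_*)\). Since \(X\) is 2-dimensional, \(C_3 = 0\), so \(\partial_3 = 0\). Hence \(\beta_2(X;\Z_2) = \dim \ker(\partial_2)\). Also \(C_2(X;\Z_2) \cong \Z_2^n\), with one generator \(F_i\) for each surface \(\mathbb{M}_i\).

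The key step is computing \(\partial_2(F_i)\) for each type of surface.
\begin{itemize}[label={}]
\item If \(\mathbb{M}_i\) is orientable, the computation is exactly the one in Theorem \ref{Homology Orientable}. The \(d_{i,j}\) terms cancel and the commutator terms \(a_k+b_k-a_k-b_k\) cancel, leaving \(\sum_j w_{i,j}c_{f(i,j)}\).
\item If \(\mathbb{M}_i\) is nonorientable with word \(a_1^2\cdots a_g^2\), the attaching map contributes \(\sum_k 2a_k\), which vanishes mod 2. The \(d_{i,j}\) terms still appear once with each sign, so they cancel.
\item The boundary terms contribute \(\pm w_{i,j}c_{f(i,j)}\). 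The sign is ambiguous because no orientation is fixed, but mod 2 we have \(-1 = 1\), so the sign is irrelevant.
\end{itemize}
In every case, \(\partial_2(F_i) = \sum_{j'}\bigl(\sum_{e\in E(i,j')} w(e)\bigr)c_{j'} \pmod 2\). This is zero by the divisibility hypothesis, so \(\partial_2 = 0\) and \(\beta_2(X;\Z_2) = n\).

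For the converse ("in any other case"), I would argue that if the hypothesis fails, then \(\ker \partial_2 \neq C_2\) over any \(\mathbb{K}\).
\begin{itemize}[label={}]
\item \emph{Parity fails for some pair \((i,j)\).} Then over \(\Z_2\), \(\partial_2(F_i)\) has an odd coefficient on \(c_j\), so it is nonzero. Over \(\Z\) or a field of other characteristic, a nonzero net contribution likewise gives a nonzero \(\partial_2(F_i)\).
\item \emph{Characteristic \(\neq 2\) with a nonorientable \(\mathbb{M}_i\).} Here the terms \(2a_k\) survive. The \(a_k\) are 1-cells belonging only to \(\mathbb{M}_i\), distinct from all \(c\)'s and \(d\)'s, so they cannot cancel against anything. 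Hence \(\partial_2(F_i)\neq 0\).
\end{itemize}
In both cases \(\dim\ker\partial_2 < n\), so \(\beta_2 < n\). Over \(\Z\), I would use the rank of the free group \(\ker\partial_2\), which is a subgroup of \(\Z^n\) not equal to it. Because \(C_2\) is free and the map is injective on some generator direction, the rank drops.

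The main obstacle is the sign bookkeeping in the nonorientable case and the rigorous rank drop. Making the latter rigorous requires more than showing that a single generator is not a cycle. One must show that \(\ker\partial_2\) is a proper subspace, which is immediate for fields. Over \(\Z\), a single nonzero \(\partial_2(F_i)\) gives \(\partial_2 \neq 0\), and since the image of \(\partial_2\) lies in a free module, the rank of \(\partial_2\) is at least 1. So the kernel has rank at most \(n-1\).

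A further subtlety is the case of characteristic \(\neq 2\) where all parity conditions hold but the net orientable-type contributions cancel. That case is still covered by the surviving \(2a_k\) terms, so the statement's "any other case" follows.
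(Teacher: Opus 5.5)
Your proposal is correct and follows the route the paper intends. The paper gives no separate proof: it only remarks that the computation of Theorem \ref{Homology Orientable} carries over once one works over \(\Z_2\), because the non-cancelling terms \(2a_k\) from the word \(a_1^2\cdots a_g^2\) and the undetermined signs of \(w_{i,j}c_{f(i,j)}\) both become irrelevant mod 2. Your converse is more careful than the paper's one-line version: you split into characteristic 2 (an odd coefficient on some \(c_j\)) versus characteristic \(\neq 2\) or \(\Z\), where the surviving \(2a_k\) term does the real work, since an odd coefficient on \(c_j\) can still vanish in odd characteristic, and your rank-of-image argument over \(\Z\) is sound.
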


\section{Simplicial decompositions of  finite stratifolds}

In this section, we will describe the simplicial decompositions of any finite stratifold, that is, we will give a configuration from which we can recreate all possible triangulations of a finite stratifold. This will be the starting point of our main result from Sec. \ref{sec:main-result}.

The configuration that accomplishes the purpose mentioned above is that of multiple polygons $P_1, P_2, \dots, P_k$ with identified sides. A polygon $P$ with \emph{identified sides} is the simplicial complex resulting from taking the quotient of a triangulated polygon $P$ from the Euclidean space under a relation between its edges in $\partial P$.

\begin{theorem}\label{Simplicial Decomposition}
Let $K$ be any triangulation for the finite stratifold \(X(\mathcal{M}, \mathcal{C})\) such that \(|\mathcal{M}| = n\) and \(|\mathcal{C}| = r\). Then $K$ is simplicially homeomorphic to a collection of triangulated polygons $P_1, P_2, \dots, P_n$ with identified sides.
\end{theorem}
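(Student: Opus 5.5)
Fix a triangulation $K$ of $X(\mathcal{M},\mathcal{C})$. The singular set $X_1=\bigcup_j c_j$ is a closed $1$-manifold, so it is a subcomplex of $K$: every point of $X_1$ has a neighborhood $\R\times CL$ with $|L|>2$, while every point off it has a neighborhood $\R^2$. This is a topological property, so a simplex whose interior meets $X_1$ must lie in $X_1$. Hence each $c_j$ is triangulated as a cycle of edges of $K$.

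Cut $K$ open along $X_1$. For each surface $\mathbb{M}_i$, let $K_i$ be the closure of the union of all $2$-simplices whose interiors lie in the open surface $\mathbb{M}_i^\circ = \mathbb{M}_i\setminus\partial\mathbb{M}_i$, with these simplices taken before identification. Each $K_i$ is a triangulated compact surface with $k_i$ boundary circles. Each boundary circle maps simplicially onto some $c_{f(i,j)}$ by a covering of degree $w_{i,j}$. If $K_i$ fails to be a genuine simplicial complex because of identifications on its boundary, apply one barycentric-style refinement near the boundary, so that $K_i$ becomes an honest triangulated surface with boundary; this refinement is compatible with the coverings. Then $K$ is recovered from $\bigsqcup_i K_i$ by the quotient that glues the boundary edges through the covering maps.

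Turn each $K_i$ into a triangulated polygon $P_i$ with identified sides. Take the dual graph of the $2$-simplices of $K_i$, with two triangles adjacent when they share an interior edge. Because $\mathbb{M}_i$ is connected, this graph is connected, so we can choose a spanning tree $T_i$. Gluing the triangles of $K_i$ along exactly the edges dual to $T_i$ produces a disc, since gluing a new triangle along a single edge to a disc again gives a disc. This disc is a triangulated polygon $P_i$ embedded in the plane. Every remaining edge appears on $\partial P_i$: the interior edges not dual to $T_i$ appear twice and are paired with each other, and the edges of $\partial K_i$ appear once. Now compose with the gluing along $\mathcal{C}$. The boundary edges of $P_i$ lying over $c_j$ are identified with the corresponding edges of every other $P_{i'}$, and of $P_i$ itself, that cover the same edge of $c_j$. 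This is a relation on edges of $\bigsqcup_i\partial P_i$, and the resulting quotient $\bigsqcup_i P_i/\!\sim$ is simplicially isomorphic to $K$, which proves the statement. It is natural to allow identifications across different polygons here, since a single circle can be shared by several surfaces.

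The main obstacle is the first step: justifying that $X_1$ is a subcomplex and that cutting along it yields $n$ honest triangulated compact surfaces $K_i$. This needs the topological invariance of the local link type, and care because a covering of degree $w_{i,j}>1$ means a boundary circle of $K_i$ has more edges than $c_j$. I would handle it by working with the link of each vertex. If the link of a vertex is a circle, the vertex lies in a surface interior. If the link is a graph with vertices of degree $|L|\ge 3$, the vertex lies in $X_1$. The edges with more than two incident triangles, or lying over $c_j$, are then exactly the edges of $X_1$. Once this is settled, the spanning-tree unfolding is routine, exactly parallel to the spanning tree argument in Theorem \ref{Perfect in Graphs}.
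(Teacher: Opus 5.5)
Your proposal is correct and follows essentially the paper's route: the circles of $\mathcal{C}$ form a subcomplex, $K$ lifts to a triangulation of each $\mathbb{M}_i$, each surface is unfolded to a polygon, and the pieces are reglued along the coverings. Your dual-spanning-tree unfolding actually proves the step the paper only asserts, namely that every triangulated compact surface with boundary comes from a triangulated polygon with identified sides.

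Do, however, delete the conditional barycentric refinement. If it were ever applied, you would get a polygon decomposition of a subdivision of $K$ rather than of $K$ itself. It is also never needed, for two reasons. First, the lifted complex is already simplicial: each closed simplex lifts injectively, and distinct lifts project either to distinct simplices of $K$ or to disjoint sheets of a covering of some $c_j$. Second, gluing triangles along a tree always produces an honest triangulated convex polygon, because each newly attached triangle contributes a fresh vertex.
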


\begin{proof}
By definition, $X = X(\mathcal{M}, \mathcal{C})$ is the quotient space obtained from the disjoint union of compact surfaces $\mathbb{M}_1, \dots, \mathbb{M}_n$ by identifying each boundary component $e_{i,j} \subset \partial \mathbb{M}_i$ with a circle $c_{f(i,j)} \in \mathcal{C}$ via a rotation of angle $2\pi / w_{i,j}$.

Let
\[
\rho \colon \bigsqcup_i \mathbb{M}_i \to X
\]
denote the quotient map. Since the identifications occur only along boundary components, we have
\[
X \setminus \bigcup_{i,j} c_{f(i,j)} \;\cong\; \bigsqcup_i \bigl(\mathbb{M}_i \setminus \partial \mathbb{M}_i\bigr).
\]

By construction, $X$ is a $2$–dimensional CW complex, and therefore any triangulation $K$ of $X$ has dimension two. For each $2$–simplex $F \in K$, its interior $F^\circ$ is an open subset of $X$, so every point $p \in F^\circ$ has a neighborhood homeomorphic to $\mathbb{R}^2$.

On the other hand, as shown in \cite{estratificies}, each point of a circle $c \in \mathcal{C}$ has a neighborhood homeomorphic to $\mathbb{R} \times CN$, where $CN$ denotes the open cone on a finite set $N$ with $|N| > 2$. Such points are not locally planar, and hence cannot lie in the interior of any $2$–simplex. It follows that each circle $c \in \mathcal{C}$ is contained in the $1$–skeleton of $K$.

Since the $1$–skeleton of $K$ is a finite $1$–dimensional simplicial complex and each $c \in \mathcal{C}$ is a closed subset homeomorphic to $S^1$, it follows that every $c_{f(i,j)}$ is a 1-dimensional complex of $K$.

Now fix $i$, and let $\rho_i \colon \mathbb{M}_i \to X$ be the restriction of $\rho$. The map $\rho_i$ induces a homeomorphism from $\mathbb{M}_i \setminus \partial \mathbb{M}_i$ onto a connected component 
\[
M_i \subset X \setminus \bigcup_{j} c_{f(i,j)},
\]
and restricts to a finite covering map from each boundary component $e_{i,j} \subset \partial \mathbb{M}_i$ onto $c_{f(i,j)}$. And lifting the triangulation in $M_i$ to $\mathbb{M}_i - \partial \mathbb{M}_i$.

Thus, the triangulation $K$ lifts to a triangulation of each surface $\mathbb{M}_i$. Implyging that every triangulation $K$ is obtained by taking the quotient of a triangulated surfaces under the boundary identifications. Since every triangulation of a compact surface with boundary arises from a triangulation of a polygon with identified sides, each $\mathbb{M}_i$ is triangulated by some polygon $P_i$. Therefore, $K$ is simplicially homeomorphic to the disjoint union
\[
P_1 \sqcup \cdots \sqcup P_n
\]
with suitable identifications of sides.
\end{proof}

Notice that we proved something more subtle. This is, that every triangulation of $\mathcal{S}$ comes from a triangulation of some $\mathbb{M}_i$. Unfortunately, not all triangulations of $\Sigma$ induce a triangulation of $\mathcal{S}$, as it is required that $c_i \subset \partial \Sigma$ be triangulated with $k*n_i$ edges with $k \geq 3$. We will often ignore this fact in the proof, but it will occasionally arise, particularly when drawing examples.

\section{Computing optimal and perfect functions for stratifolds}\label{sec:main-result}
In this section, we will discuss the existence of algorithms to efficiently compute optimal dmf's. In particular, we will define a special family of stratifolds where we will be able to obtain optimal functions in polynomial time and determine if these are perfect or not. 

Lewiner et al proved in \cite{Optimal} that the problem of finding optimal dmf's for any given simplicial complex of dimension 2 reduces to the collapsibility problem presented in \cite{collapsibility}. Hence, it is a MAX-SNP Hard problem. For the case of 2-stratifolds, which are a proper subset of simplical complexes of dimension 2, the problem of finding optimal dmf's also happens to be MAX-SNP Hard. This follows from the fact that the simplicial complexes used in \cite{collapsibility} for their proof turn out to be 2-stratifolds. 

While unable to provide an algorithm in polynomial time to find optimal dmf's for the case of general 2-stratifolds, we can actually say a lot about a special type of them, which we will call \textit{twisted stratifolds}. A twisted stratifold is a stratifold \(X(\mathcal{M}, \mathcal{C})\) such that all the weights of the edges of \(G(X)\) are distinct from \(\pm 1\). 

Given a twisted stratifold \(X(\mathcal{M}, \mathcal{C})\), we can classify it in one of four categories: 

\begin{enumerate}
    \item When all the weights of the white vertices of \(G(X)\) are non-negative and there exists a prime \(p\) such that \(p|\sum_{e \in E(i, j)}w(e)\) for all pairs \((i, j)\) with \(i \leq |\mathcal{M}|\) and \(j \leq |\mathcal{C}|\).

    \item When all the weights of the white vertices of \(G(X)\) are non-negative but there does \textbf{not} exist a prime \(p\) such that \(p|\sum_{e \in E(i, j)}w(e)\) for all pairs \((i, j)\) with \(i \leq |\mathcal{M}|\) and \(j \leq |\mathcal{C}|\). 

    \item When there is at least one white vertex in \(G(X)\) with negative weight and \(2|\sum_{e \in E(i, j)}w(e)\) for all pairs \((i, j)\) with \(i \leq |\mathcal{M}|\) and \(j \leq |\mathcal{C}|\). 

    \item When there is at least one white vertex in \(G(X)\) with negative weight and \(2\nmid \sum_{e \in E(i, j)}w(e)\) for some pair \((i, j)\) with \(i \leq |\mathcal{M}|\) and \(j \leq |\mathcal{C}|\). 
\end{enumerate}

As we will see in this section, we can always construct optimal dmf's for triangulations of twisted stratifolds in polynomial time. Furthermore, we are able to determine if they admit perfect dmf's or not. Before presenting the algorithm that will be used, we need to introduce some terminology. 

Let \(K\) be a triangulation of \(X(\mathcal{M}, \mathcal{S})\). We will call all 0-cells of \(K\) \textbf{vertices}. If a vertex lies on \(\rho(\partial \mathbb{M}_i)\), where \(\mathbb{M}_i \in \mathcal{M}\) and \(\rho\) is the projection from Theorem \ref{Simplicial Decomposition}, we call it a \textbf{boundary vertex}. If it does \textit{not}, we call it an \textbf{internal vertex}.

On the other hand, we call 1-cells \textbf{edges} and classify them as follows: if it lies on \(\rho(\partial \Sigma)\), we call it a \textbf{boundary edge}; if it connects two vertices in the boundary but does \textit{not} lie on the polygonal boundary, we call it a \textbf{crossing edge}; if it connects an internal vertex with a boundary vertex, we call it a \textbf{bridge edge}; and if it connects two internal vertices, we call it an \textbf{internal edge}. Finally, we call all 2-cells \textbf{faces}. 

Recalling the description of the simplicial decompositions of our spaces given by Theorem \ref{Simplicial Decomposition}, we know that these are just collections of triangulated polygons with some of their sides identified. Moreover, we can subdivide these polygons into smaller polygons that have no crossing edges, as shown in Figure \ref{fig:algorithm_1} (a). 

With this in mind, we present the following algorithm to obtain a dmf for any triangulation \(K\) of stratifold \(X(\mathcal{M}, \mathcal{C})\) in linear time. This algorithm is a slight modification of the one presented in \cite{Optimal}, following the heuristic given by the authors for the case of non-regular edges and performing the steps in a different order. 

\begin{figure}[h]
    \centering
    \includegraphics[scale = 0.85]{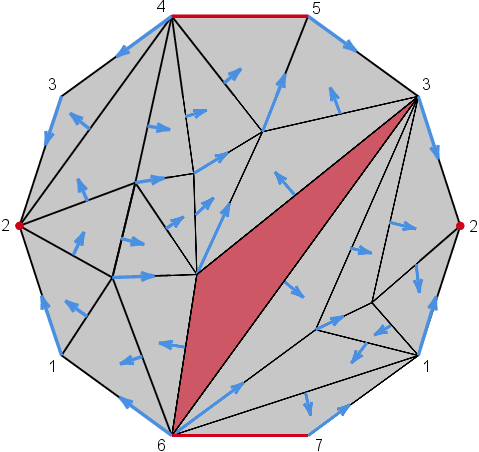}
    \caption{Result of applying the algorithm to a triangulation of the space \(S(\mathbb{S}^2, (1, 1))\).}
    \label{fig:algotihm_result}
\end{figure}

\begin{enumerate}
    \item For each subcomplex \(K_i\) of \(K\) corresponding to a surface \(\mathbb{M}_i \in \mathcal{M}\), do the following: 
    \begin{enumerate}[label*=\arabic*.]
    \item For each polygon \(P_{i, 1}, P_{i,2}, \dots, P_{i, r_i}\) of \(K_i\) defined by the crossing edges, perform the following steps:
    \begin{enumerate}[label*=\arabic*.]
        \item Consider the graph consisting of all the internal vertices and edges of the polygon, along with a boundary vertex and a bridge edge that connects it with the graph.
        \item Compute a spanning tree of the graph obtained in the previous step. 
        \item Define a discrete vector field \(V_{i, j}\) associated with a dmf on the spanning tree, following the construction from the proof of Theorem \ref{Perfect in Graphs}, such that its unique critical 0-cell is the boundary vertex.
        \item Construct a new graph where each 2-cell of our space corresponds to a vertex, and connect two vertices if and only if the 2-cells associated to them share a non-boundary edge that was not used in step 1.1.2. Each of these graphs is a tree, as it will be shown in Theorem \ref{dual_is_tree}. 
    \end{enumerate}
    \item Connect all the trees obtained in step 1.1.4 by adding the duals of the crossing edges. This also gives us a tree \(T\) since we are connecting \(r\) trees with \(r - 1\) edges. 

    \item Construct the set \(V_{T_i}\) as follows: choose an arbitrary 2-cell \(F_{i, 0}\) of \(K_i\) and consider its associated vertex \(v_{i, 0} := v(F_{i, 0})\) in \(T\). For each edge \(e\) in \(T\), add \((e, F(e))\) to \(V_F\), where \(F(e)\) is the unique 2-cell that corresponds with the vertex \(v(F(e))\) of \(e\) farthest from \(v_{i, 0}\) in \(T\). Note that for \(e \neq e'\) we have that \(F(e) \neq F(e')\), since there are no loops in \(T\).       
    \end{enumerate}
    
    \item Consider the graph formed solely by the boundary vertices and edges, accounting for all the identifications. Construct a discrete vector field \(V_B\) associated with a dmf in this graph, following again the proof of Theorem \ref{Perfect in Graphs}.  
    
    \item Take \[V := V_B \cup \left(\bigcup_{i = 1}^{|\mathcal{M}|}V_{T_i}\right) \cup \left(\bigcup_{i = 1}^{|\mathcal{M}|}\bigcup_{j = 1}^{r_i} V_{i, j}\right).\]
\end{enumerate}

The result of applying the algorithm in the subcomplex \(K_i\) corresponding to a given surface \(\mathbb{M}_i\) is shown in Figure \ref{fig:algotihm_result}. Moreover, Figure \ref{fig:algorithm_1} shows the detailed process.

Since a spanning tree of a graph can be found in linear time with respect to the number of edges the graph has, our algorithm runs in \(O(|K|)\) time. 

We will now show the correctness of the algorithm.

\begin{theorem}
    Let \(K\) be a triangulation of \(\mathbb{D}^2\), and \(T\) a tree in the 1-skeleton of \(K\) with exactly one vertex on the boundary of \(K\). Then \(K \setminus T\) is connected.
\end{theorem}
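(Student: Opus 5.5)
We read $K \setminus T$ as the topological complement $|K| \setminus |T|$; this is the reading needed for the dual-tree construction in step 1.1.4 of the algorithm. The plan is a $\mathbb{Z}_2$-chain argument: if the complement were disconnected, one component would give a nonzero $1$-cycle supported in $T$. That is impossible, because a tree has no cycles.

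First we record which cells meet the complement. Let $b$ be the unique vertex of $T$ on $\partial K$. Since $T$ has only one boundary vertex, it contains no boundary edges. Hence $A := \partial K \setminus \{b\}$ lies in $K \setminus T$, and $A$ is connected, since it is a circle minus a point. Every open face $F^\circ$ lies in $K \setminus T$, and so does the interior of every edge not in $T$. Every vertex not in $T$ lies in the closure of some face, and a small segment from it into that face avoids $T$. So every point of $K \setminus T$ can be joined inside $K \setminus T$ to the interior of some face. Consequently each connected component of $K \setminus T$ contains a nonempty set of faces. Two faces sharing an edge not in $T$ lie in the same component, because the interior of that edge connects them.

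Now suppose $K \setminus T$ is disconnected, and let $U$ be a component not containing $A$. Let $\mathcal{F}_U$ be the set of faces whose interiors lie in $U$; it is nonempty by the previous paragraph. We consider the $\mathbb{Z}_2$-chain $c = \sum_{F \in \mathcal{F}_U} F \in C_2(K;\mathbb{Z}_2)$. An edge appears in $\partial c$ exactly when it belongs to an odd number of faces of $\mathcal{F}_U$. In a triangulated disk, an interior edge lies in exactly two faces and a boundary edge in exactly one. The key steps are the following.
\begin{enumerate}
\item No face of $\mathcal{F}_U$ has a boundary edge. Such an edge is not in $T$, so its interior would join $U$ to $A$, contradicting the choice of $U$.
\item An interior edge appears in $\partial c$ exactly when exactly one of its two faces lies in $\mathcal{F}_U$. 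By the previous paragraph, such an edge must belong to $T$.
\end{enumerate}
Hence $\partial c$ is a $1$-cycle supported in $T$.

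It remains to show that $\partial c \neq 0$. Since $H_2(\mathbb{D}^2;\mathbb{Z}_2)=0$ and there are no $3$-cells, $\partial_2$ is injective, so $\partial c \neq 0$ because $c \neq 0$. On the other hand, $T$ is a tree, so $H_1(T;\mathbb{Z}_2)=0$, and since $T$ has no $2$-cells, its only $1$-cycle is $0$. This contradiction shows that $K \setminus T$ is connected.

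The main obstacle is the careful bookkeeping in the first paragraph. We must check that each component of the complement really contains a face interior, and that "sharing an edge not in $T$" is the only way faces are separated. The degenerate case where $T$ is the single vertex $b$ is covered by the same argument.
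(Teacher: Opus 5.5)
Your argument is correct, and it follows a genuinely different route from the paper's. The paper inducts on the number of edges of $T$. Its base case is a disk minus one boundary vertex. Its inductive step deletes a leaf edge $e$ with interior endpoint $w$ and asserts that removing an arc meeting $T'$ in only one endpoint cannot disconnect $K\setminus T'$. That is short and intuitive, but the key step is asserted rather than proved; making it rigorous needs a local rerouting argument through the star of $w$.

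You replace the induction with a discrete Jordan-curve (Alexander-duality type) argument. A component avoiding $\partial K\setminus\{b\}$ produces a nonzero $\mathbb{Z}_2$ $2$-chain. Because each interior edge of the triangulated disk lies in exactly two faces and each boundary edge in one, the boundary of that chain is a $1$-cycle supported in $T$. On the other hand, $\partial_2$ is injective because $H_2(\mathbb{D}^2;\mathbb{Z}_2)=0$. Every step can be checked, and the proof isolates exactly what it uses: $\partial K\setminus T$ is connected and $H_1(T;\mathbb{Z}_2)=0$. So it holds verbatim for any forest meeting $\partial K$ in at most one vertex.

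Your method also gives, with a few more lines, the combinatorial statement that step 1.1.4 actually needs. That statement is connectivity of the dual graph through non-boundary edges not in $T$. The topological statement, yours or the paper's, yields it only after an extra link argument. To see this, take $\mathcal{F}_U$ to be a dual component and $c$ its face sum. The boundary vertices $u\neq b$ meet no edge of $T$, so the cycle condition there forces $\partial c$ to contain either all boundary edges or none. The case ``none'' is ruled out by injectivity, as before. In the case ``all'', let $c_K$ be the sum of all faces; then $\partial(c+c_K)$ is supported in $T$, so $c=c_K$.
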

\begin{proof}
    We will proceed by induction on the number of edges in the tree \(T\). If \(T\) has 0 edges, then we are just removing a vertex over the boundary of \(K\), which gives us a connected space. 
    
    Assume that the statement holds for any tree with \(n\) edges. Let \(T\) be a tree of \(n + 1\) edges with exactly one vertex \(v\) on the boundary of \(K\). Since \(T\) is a tree with at least one edge, we can consider a leaf \(w \neq v\). By assumption, \(w\) will be in the interior of \(K\). Let \(e\) be the unique edge incident to \(w\), and denote \(T' := T \setminus \{e\}\). Then, \(T'\) is a tree with \(n\) edges and still has exactly one vertex over \(K\), which is \(v\). By the inductive hypothesis, this implies that \(K \setminus T'\) is connected.

    Now, note that \(e\) has exactly one endpoint over \(T'\), as the other one is \(w\), which lies in the interior of \(K \setminus T'\). Hence, removing it from \(K \setminus T'\) dos not disconnect the space, implying that \((K \setminus T') \setminus \{e\} = K \setminus T\) is a connected space, as desired.
\end{proof}

\begin{corollary}\label{tree_connected}
The graphs constructed in step 1.1.4 are connected. 
\end{corollary}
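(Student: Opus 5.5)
The idea is to read connectivity of the graph built in step 1.1.4 off the connectivity of the topological complement given by the preceding theorem. Fix a polygon \(P = P_{i,j}\) cut out by the crossing edges. By construction \(P\) has no crossing edges in its interior, so it is a triangulated disc whose boundary consists of boundary edges and crossing edges. We apply the preceding theorem to the tree \(T\) of step 1.1.2. This tree is a spanning tree of the internal vertices and edges of \(P\), together with one boundary vertex and one bridge edge, so it meets \(\partial P\) in exactly one vertex. The theorem then tells us that \(P \setminus T\) is connected. Moreover, \(P\setminus \partial P\) minus \(T\) is still connected, because removing the boundary circle from a connected region whose interior is dense does not disconnect it. We will make this precise by working with open stars.

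The key step is to convert topological connectivity into connectivity of the dual graph. Let \(U\) be the union of the open 2-cells of \(P\) together with the relative interiors of those edges of \(P\) that are neither on \(\partial P\) nor in \(T\). Every point of \(P\setminus(\partial P\cup T)\) is of one of three kinds: it lies in an open face, in the open interior of such an edge, or it is a vertex. Internal vertices all lie in \(T\), since \(T\) spans them, and boundary vertices lie on \(\partial P\). Hence \(P\setminus(\partial P\cup T)=U\). Each open face and each open edge is connected. Also, each open edge in \(U\) touches exactly the two faces adjacent to it: it is interior, so it has two adjacent faces, and in the dual graph this is exactly an edge joining those two faces. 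Therefore the connected components of \(U\) correspond bijectively to the connected components of the dual graph of step 1.1.4. Since \(U\) is connected, the dual graph is connected.

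I expect the main obstacle to be justifying that \(U\) is connected, as opposed to the closed set \(P\setminus T\) which the theorem provides. One possible approach is to rerun the induction of the preceding theorem with \(\partial P\) removed throughout. The base case is \(P\setminus\partial P\), an open disc, minus one point of \(\partial P\), which is still an open disc. The inductive step removes a leaf edge whose free endpoint is interior; this amounts to removing a closed arc that meets the previously removed set in a single point, and such a removal does not disconnect a surface. A cleaner alternative is to note that \(\partial P\cup T\) is a circle with a tree attached at one point. This set is connected and its complement in the closed disc has exactly one component, because the tree cannot separate. This is Jordan curve reasoning applied to the circle \(\partial P\) with a "hair" attached.

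A further subtlety is that sides of the polygon \(P_i\) may be identified. The argument should therefore be carried out in the unidentified polygon, where \(P\) is an honest disc. This is harmless: the identifications only glue boundary edges, and boundary edges are excluded from both the dual graph and \(U\).
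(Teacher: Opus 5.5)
Your proposal is correct and follows the paper's route. The paper reads the corollary off the preceding theorem's conclusion that \(P\setminus T\) is connected, and your decomposition of \(U\) into open faces and open non-boundary, non-tree edges supplies the passage to the dual graph that the paper leaves implicit.

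The obstacle you flag is milder than you expect, even though your slogan about dense interiors is false in general (consider two discs touching at a point). Since \(T\) is closed, every point of \(\partial P\setminus T\) has a half-disc neighbourhood inside \(P\setminus T\) whose intersection with \(\operatorname{int}P\) is connected. This local fact alone shows that \(U=P\setminus(\partial P\cup T)\) inherits connectedness from \(P\setminus T\), so neither rerunning the induction nor a Jordan-type argument is needed.
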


\newpage
\begin{figure}[h!]
    \centering
    \includegraphics[scale = 0.68]{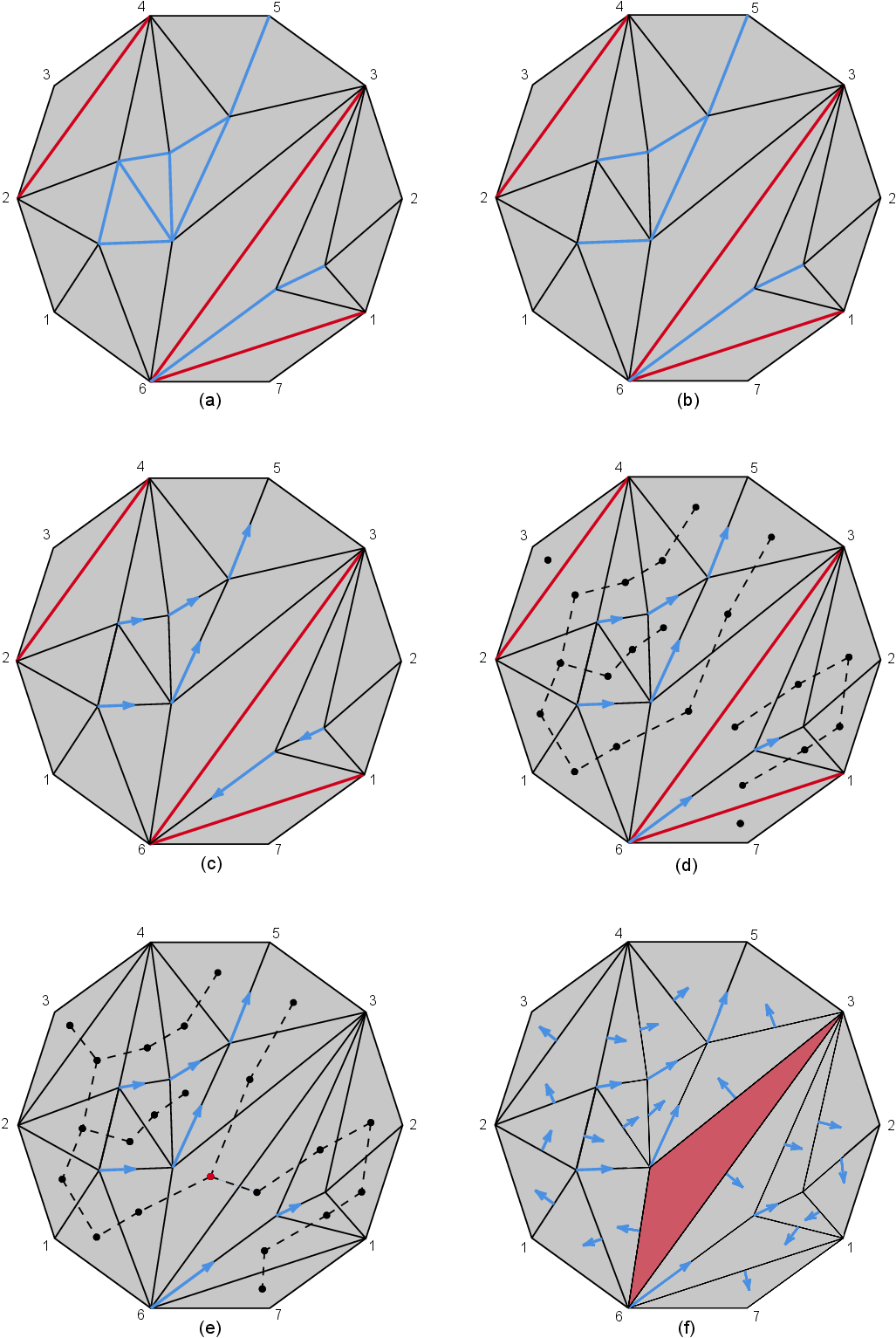}
    \caption{(a) Step 1.1.1.1 of the algorithm. Red edges are the crossing edges and blue edges are the edges selected in this step. (b) Step 1.1.2 (c) Step 1.1.3 (d) Step 1.1.4 (e) Step 1.2 (f) Step 1.3}
    \label{fig:algorithm_1}
\end{figure}

\begin{theorem}\label{dual_is_tree}
    Let \(G_j\) be the graph constructed in step 1.1.4 of the Algorithm for polygon \(P_{i, j}\). Then \(G_j\) is a tree. 
\end{theorem}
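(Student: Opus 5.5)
We show that \(G_j\) is connected and that its number of edges is one less than its number of vertices. Connectedness comes from Corollary \ref{tree_connected}, and the edge count comes from an Euler characteristic computation on the polygon \(P_{i,j}\). Throughout, \(P_{i,j}\) is a triangulated disk \(\mathbb{D}^2\), since the crossing edges cut the polygon into sub-polygons with no crossing edges. Let \(V\), \(E\), \(F\) be its numbers of vertices, edges and faces. Split them as \(V = V_\partial + V_{\mathrm{int}}\) and \(E = E_\partial + E_{\mathrm{int}}\), where \(E_{\mathrm{int}}\) counts the non-boundary edges (internal and bridge edges). Since \(P_{i,j}\) is a disk, \(V - E + F = 1\), and its boundary is a circle, so \(V_\partial = E_\partial\). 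Hence
\[
F - E_{\mathrm{int}} + V_{\mathrm{int}} = 1 .
\]

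Next we count the vertices and edges of \(G_j\). Its vertices are the \(F\) faces of \(P_{i,j}\). The graph built in step 1.1.1 contains all \(V_{\mathrm{int}}\) internal vertices plus one boundary vertex, so its spanning tree \(T\) from step 1.1.2 has \(V_{\mathrm{int}}+1\) vertices and hence \(V_{\mathrm{int}}\) edges. All of these edges are non-boundary edges: they are internal edges plus the single bridge edge. The edges of \(G_j\) correspond to the non-boundary edges not in \(T\), and there are \(E_{\mathrm{int}} - V_{\mathrm{int}}\) of them. Each non-boundary edge of a triangulated disk lies in exactly two faces, so each such edge contributes exactly one edge of \(G_j\). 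Therefore \(G_j\) has \(F\) vertices and \(E_{\mathrm{int}}-V_{\mathrm{int}} = F-1\) edges by the identity above.

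For connectedness, the preceding theorem says \(K\setminus T\) is connected, because \(T\) meets \(\partial P_{i,j}\) in exactly one vertex. Corollary \ref{tree_connected} translates this into connectedness of \(G_j\). A connected graph with \(F\) vertices and \(F-1\) edges is a tree, which proves the claim.

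The main obstacle is this last translation, from connectedness of \(K\setminus T\) to connectedness of the dual graph \(G_j\). A path in \(K\setminus T\) can be perturbed to avoid the vertices of \(K\), since those form a finite set that does not separate. But it may still pass from one face to another across a boundary edge, or through a vertex not in \(T\). To handle this, I would replace \(K\setminus T\) by the union of the open faces together with the open non-boundary edges not in \(T\). This space is connected because removing the boundary and the finitely many vertices from a connected 2-dimensional region does not disconnect it near a disk. Crossings between faces then occur only through edges of \(G_j\), which gives connectedness of \(G_j\). A secondary point is to check that \(P_{i,j}\) really is a disk, i.e.\ that the identifications in Theorem \ref{Simplicial Decomposition} only affect the boundary. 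This is true because the sub-polygons are cut out before any identification of sides.
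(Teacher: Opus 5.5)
Your proposal is correct and takes the same route as the paper. Connectedness of \(G_j\) comes from Corollary~\ref{tree_connected}, and the Euler-characteristic identity \(F - E_{\mathrm{int}} + V_{\mathrm{int}} = 1\) (using \(V_\partial = E_\partial\) before identifications) shows that \(G_j\) has \(F\) vertices and \(F-1\) edges. Your extra argument for the connectedness step usefully fills in what the paper leaves implicit in the corollary: you pass from connectedness of \(K \setminus T\) to connectedness of the dual graph through the open faces and the open non-boundary edges not in \(T\).
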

\begin{proof}
    By Corollary \ref{tree_connected}, we know that \(G_j\) is a connected graph, which implies that \(\beta_0(G_j) = 1\). Now, let \(V_j\) and \(E_j\) be the number of vertices and edges of \(G_j\), respectively. By Euler's characteristic formula, we know that 
    \begin{equation}\label{euler_dual_is_tree}
        1 - \beta_1(G_j) = V_i - E_i.
    \end{equation}
    Now, denote the total number of 0-cells (vertices), 1-cells (edges), and 2-cells (faces) of our polygon \(P_{i, j}\) as \(V, E\) and \(F\), respectively. Again, by Euler's characteristic formula, we know that \(1 = V - E + F\). Since the number of boundary vertices \(V_\partial\) is equal to the number of boundary edges \(E_\partial\) before identifications, this equality implies that \(1 = (V - V_\partial) - (E - E_\partial) + F\). Moreover, if we denote by \(V_I\) the number of internal vertices, then \(V - V_\partial = V_I\). Since \(F = V_i\), 
    \[1 = V_I - (E - E_\partial) + V_i = V_i - (E - E_\partial - V_I).\]
    Finally, note that \(E_i = E - E_\partial - V_I\), as there is exactly one edge in \(G_i\) for each edge of \(X\) that is not a boundary edge and that was not chosen to be part of the tree, which has \(V_I\) edges. This implies that \(V_i - E_i = 1\). Combining this equality with \ref{euler_dual_is_tree}, we conclude that \(\beta_1(G_i) = 0\). Hence, \(G_i\) is a tree. 
\end{proof}

\begin{theorem}\label{Algorthm Critical Cells}
    The set \(V\) obtained by the former algorithm is a discrete vector field. Moreover, its induced dmf has one critical cell of dimension 0, and \(n\) critical cells of dimension 2. 
\end{theorem}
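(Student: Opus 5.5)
We must show three things: $V$ is a discrete vector field (every cell lies in at most one pair), $V$ is a gradient field, and its critical cells are exactly one vertex and $n=|\mathcal{M}|$ faces.

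\textbf{Disjointness.} The three families $V_B$, $V_{T_i}$ and $V_{i,j}$ pair different kinds of cells.
\begin{itemize}
\item $V_B$ uses only boundary vertices and boundary edges.
\item $V_{i,j}$ uses internal vertices, internal edges and bridge edges of $P_{i,j}$. Its one possible boundary vertex is the chosen root, and this root is left critical inside $V_{i,j}$.
\item $V_{T_i}$ uses faces of $K_i$ and the non-boundary edges that were not chosen in step 1.1.2. These are crossing edges and non-tree interior edges.
\end{itemize}
Within each family, uniqueness follows from the tree argument of Theorem \ref{Perfect in Graphs}, or from its dual version noted in step 1.3. Different $V_{i,j}$ involve disjoint interior cells, since polygons cut along crossing edges share only boundary cells. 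Different $V_{T_i}$ involve different surfaces. Hence every cell appears in at most one pair, and $V$ is a discrete vector field.

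\textbf{No closed paths.} I will verify the hypothesis of Theorem \ref{closed paths} dimension by dimension.
\begin{itemize}
\item \emph{Dimensions $0$--$1$.} A $V$-path alternates between vertices and edges paired in $V_B\cup\bigcup V_{i,j}$. Following such a path from the tail of a pair only moves along edges of the corresponding spanning tree. Pairs in $V_{i,j}$ may exit into the root boundary vertex, but no pair leads from a boundary vertex back into the interior. So any closed path would have to lie within a single tree. There, each step moves strictly toward the root, so the path cannot close.
\item \emph{Dimensions $1$--$2$.} A path $e_0<F_0>e_1<\cdots$ requires $e_1$ to be a paired edge of $F_0$ different from $e_0$. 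By construction, $(e,F(e))$ with $F(e)$ farther from $v_{i,0}$ in $T$. Then $e_1$ is an edge of $T$ incident to $v(F_0)$, and it is paired to the face at its far end, which is farther from $v_{i,0}$ than $F_0$. So the distance to $v_{i,0}$ increases strictly along the path, and it cannot close.
\end{itemize}

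\textbf{Counting critical cells.}
\begin{itemize}
\item \emph{Vertices.} Every internal vertex is paired in some $V_{i,j}$, because each tree spans all internal vertices of its polygon. The boundary vertices, after identification, form a graph. I must argue it is connected, since $X$ is connected and each surface's boundary vertices connect through the circles and the $d$-edges of the CW model. I expect this step needs care: one may need to use that $X$ connected plus the crossing-edge structure makes the boundary graph connected, or else define $V_B$ componentwise and absorb extra components via bridge roots. Granting connectivity, $V_B$ leaves exactly one critical vertex.
\item \emph{Faces.} Each $V_{T_i}$ pairs all faces of $K_i$ except $F_{i,0}$, because $T$ is a spanning tree of the dual graph by Theorem \ref{dual_is_tree} and step 1.2. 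This gives $n$ critical faces.
\end{itemize}

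The main obstacle is the statement about critical cells in dimension one. The theorem as stated does not claim that no edges are critical, so only the counts for dimensions $0$ and $2$ need verifying, together with connectivity of the boundary graph.
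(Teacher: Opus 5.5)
Your proposal is correct and follows the paper's proof essentially step for step. The structure is the same: disjointness by cell type, vertex--edge paths either staying in one rooted tree \(V_{i,j}\) or leaving irreversibly into the boundary graph, edge--face paths confined to the dual tree of \(K_i\), and the same count of critical vertices and faces. Your strictly increasing distance to \(v_{i,0}\) only makes explicit the paper's appeal to gradient fields on trees.

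The one step you grant, connectivity of the step-2 graph, is settled in the paper by the connectedness of \(X\), and you can close it the same way. Read \emph{boundary} as the image of the polygon boundaries \(\partial P_i\), which is the reading the disk lemma and the Euler count in Theorem \ref{dual_is_tree} implicitly use. Each such image is connected and contains every circle to which \(\mathbb{M}_i\) is glued, so connectedness of \(G(X)\) makes their union connected. If only the circles \(c_j\) counted as boundary, that graph would have \(|\mathcal{C}|\) components, so your caution about the definitions is well placed.
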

\begin{proof}
    First, we will show that \(V\) is a discrete vector field. Let \(v\) be a vertex of our triangulation. If \(v\) is an internal vertex, in step 1.1.3 we paired it with a unique internal edge and with no other type of edges. On the other hand, if \(v\) is a boundary vertex, we either paired it with a unique boundary edge in step 2 or left it unpaired. Hence, every vertex of our triangulation appears in at most one pair of \(V\). 

    Consider now an edge \(e\) of our triangulation. If it is a boundary edge, we either paired it with a unique boundary vertex in step 2 or left it unpaired. If it is a crossing edge, it is paired with a unique face in step 1.3. On the other hand, if it is a bridge edge or an internal edge, we have three possibilities: firstly, we could have paired it in step 1.1.3 with a unique vertex. Secondly, we could have paired it with a unique face in step 1.3. In this case \(e\) can't be paired with a vertex, as we excluded those explicitly in step 1.1.4 when constructing the tree \(T\). Lastly, we could have left it unpaired. Therefore, every edge of our triangulation appears in at most one pair of \(V\).

    Finally, note that all faces \(F\) of our triangulation were paired with unique edges in step 1.3. Hence, \(V\) is a discrete vector field.  

    We will continue proving that there are no closed \(V\)-paths. For this, consider first a \(V\)-path of the form
    \[v_0 < e_0 > v_1 < e_1 > \dots < e_{r - 1} > v_r,\]
    where the \(v_i\)'s are vertices and the \(e_i\)'s are edges.
    
    If \(v_0\) is a boundary vertex, by construction this \(V\)-path is actually a \(V_B\)-path (boundary vertices/edges are only paired with boundary edges/vertices), and cannot be closed, as \(V_B\) is a gradient discrete vector field. 
    
    On the other hand, if \(v_0\) is an internal vertex of some \(K_i\), our path can either be entirely contained in the unique gradient vector field \(V_{i, j}\) constructed in step 1.1.3 corresponding to \(v_0\) or not. In the first case it can't be a closed path as it is a \(V_{i, j}\)-path. In the second case, for a path of this type to not be a \(V_i\)-path, it must have reached the critical vertex of \(V_i\), which is a boundary vertex, and from where you can only continue a path using boundary edges and vertices, being impossible to return to \(v_0\). Hence, in this case we also can't have a closed \(V\)-path. This lets us conclude that there are no closed \(V\)-paths starting in vertices.

    Now, consider a \(V\)-path of the form
    \[e_0 < F_0 > e_1 < F_1 > \dots < F_{r - 1} > e_r,\]
    where the \(e_i\)'s are edges and the \(F_i\)'s are faces. 

    Note that, by construction, this path should be contained in some \(V_{T_i}\). Since \(V_{T_i}\) is a discrete vector field and it was constructed over a tree, it is actually a gradient vector field \cite{texbook}. Hence, our path can't be a closed path, which implies that there are no closed \(V\)-paths starting with an edge. 

    Since our triangulations have cells of dimension at most 2, the former discussion shows that there are no closed \(V\)-paths. In other words, \(V\) is a gradient vector field. 
    
    Finally, we will compute the number of critical cells of each dimension. 
    
    \textit{Number of critical 0-cells.} Note that in step 1.1.3 we paired all the internal vertices. Moreover, since the graph from step 2 is connected (as our space was originally connected), according to the algorithm presented in Theorem \ref{Perfect in Graphs}, in this step we paired all the boundary vertices except for the one we chose as root. Hence, we have exactly one 0-cell.

    \textit{Number of critical 2-cells.} In step 1.3 we paired all 2-cells of \(K_i\) except for the one we took as the root of our tree. Hence, we have exactly \(|\mathcal{M}|\) critical 2-cells.
\end{proof}

With this, we can proceed to prove that the algorithm always gives an optimal dmf. 

\begin{theorem}\label{Perfect 1 and 3}
    The dmf \(f\) that the algorithm produces is perfect for all traingulations of twisted stratifolds of type 1 or 3. 
\end{theorem}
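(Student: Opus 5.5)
We prove that \(f\) is \(\mathbb{K}\)-perfect for \(\mathbb{K} = \Z_p\), with \(p\) the prime from the type 1 condition, and \(\mathbb{K} = \Z_2\) in type 3. The idea is to compare the critical cell counts from Theorem \ref{Algorthm Critical Cells} with the Betti numbers degree by degree, and to get \(m_1\) from the Euler characteristic.

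By Theorem \ref{Algorthm Critical Cells}, \(m_0(f) = 1\) and \(m_2(f) = n = |\mathcal{M}|\). Since \(X\) is connected, \(\beta_0(X;\mathbb{K}) = 1\), so \(m_0(f) = \beta_0\). For dimension 2, Theorem \ref{Homology Orientable} in type 1 and Theorem \ref{Homology Non Orientable} in type 3 give \(\beta_2(X;\mathbb{K}) = n\). Since \(\beta_2\) is a homotopy invariant, it can be computed from the CW structure of Figure \ref{fig:cw} and equals \(\beta_2(K;\mathbb{K})\). Hence \(m_2(f) = \beta_2\).

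For dimension 1 we use Part 3 of Theorem \ref{Morse Inequalities}:
\[\chi(K) = m_0(f) - m_1(f) + m_2(f).\]
We also have the homological Euler characteristic
\[\chi(K) = \beta_0(K;\mathbb{K}) - \beta_1(K;\mathbb{K}) + \beta_2(K;\mathbb{K}),\]
which holds over any field. Subtracting the two and using \(m_0 = \beta_0\) and \(m_2 = \beta_2\) gives \(m_1(f) = \beta_1(K;\mathbb{K})\). No cells of dimension \(\geq 3\) exist. Therefore \(m_i(f) = \beta_i(K;\mathbb{K})\) for every \(i\), which is perfection, and optimality follows from Part 1 of Theorem \ref{Morse Inequalities}.

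The main obstacle is the dimension 2 step: we must make sure that the hypotheses of Theorems \ref{Homology Orientable} and \ref{Homology Non Orientable} match the type 1 and type 3 definitions exactly. We then transfer the cellular computation to the simplicial complex \(K\) via invariance of homology. The twisted hypothesis (weights \(\neq \pm1\)) is not needed for this argument. It likely matters only for the converse statements about types 2 and 4, so we will simply note that it is not used here.
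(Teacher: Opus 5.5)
Your proposal is correct and follows the paper's proof essentially step for step: \(m_0 = \beta_0 = 1\) by connectedness, \(m_2 = \beta_2 = n\) from Theorem \ref{Algorthm Critical Cells} together with Theorem \ref{Homology Orientable} (type 1) or Theorem \ref{Homology Non Orientable} (type 3), and \(m_1 = \beta_1\) by comparing the two expressions for \(\chi\). Your subtraction of the two Euler characteristic identities is cleaner than the paper's displayed line, which writes \(\beta_1 = \chi(X) - \beta_0 - \beta_2\) with the signs reversed (harmless, since the same slip appears on the \(m_i\) side), and you are right that the twisted hypothesis is not used in this direction.
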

\begin{proof}
    We will only prove the Theorem for spaces of type 1, as the proof is analogous for spaces of type 3. 
    
    Let \(X(\mathcal{M}, \mathcal{C})\) be a twisted stratifold of type 1. Since stratifolds are connected, \(\beta_0(X; \Z_p) = 1 = m_0(f)\). On the other hand, by Theorem \ref{Homology Orientable} we know that \(\beta_2(X; \Z_p) = |\mathcal{M}| = m_2(f)\). Finally, by the Euler characteristic and Theorem \ref{Morse Inequalities}, we have that \[\beta_1(X; \Z_p) = \chi(X) - \beta_0(X; \Z_p) - \beta_2(X; \Z_p) = \chi(X) - 1 - n = \chi(X) - m_0(f) - m_2(f) = m_1(f).\]
    Therefore, \(f\) is perfect. 
\end{proof}

\begin{theorem}\label{2 cells}
    Let \(X(\mathcal{M}, \mathcal{C})\) be a twisted stratifold and \(K\) be a triangulation of \(X\). If \(f : K \to \R\) is a dmf, then \(m_2(f) \geq |\mathcal{M}|\).
\end{theorem}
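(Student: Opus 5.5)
The plan is to show that every dmf has at least one critical 2-cell inside each subcomplex \(K_i\) corresponding to a surface \(\mathbb{M}_i \in \mathcal{M}\). Since the \(K_i\) have pairwise disjoint sets of 2-cells (every face of \(K\) lies in exactly one \(\rho(\mathbb{M}_i)\) by Theorem \ref{Simplicial Decomposition}), this gives \(m_2(f) \geq |\mathcal{M}|\). The argument combines an acyclicity consequence of Theorem \ref{closed paths} with a local count of how many faces of \(K_i\) contain each edge, and the twisted hypothesis enters only in that count.

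\textbf{Step 1 (no free edges inside \(K_i\)).} I will show that every edge \(e\) of \(K_i\) is a face of at least two distinct 2-cells of \(K_i\).
\begin{itemize}
\item If \(e\) is not a boundary edge, it is an edge of the surface \(\mathbb{M}_i\) lying off \(\partial\mathbb{M}_i\). Through the homeomorphism \(\mathbb{M}_i \setminus \partial \mathbb{M}_i \cong M_i\), it is therefore contained in exactly two triangles of \(K_i\).
\item If \(e\) lies on a circle \(c_j\), use the lifted triangulation of \(\mathbb{M}_i\) from the proof of Theorem \ref{Simplicial Decomposition}. Each boundary component \(e_{i,k}\) attached to \(c_j\) covers \(c_j\) with \(|w_{i,k}|\) sheets, so \(e\) has \(|w_{i,k}|\) preimage edges on that component. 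Each preimage edge lies in exactly one triangle of \(\mathbb{M}_i\).
\item These triangles project to distinct faces of \(K\). A single 2-simplex of \(K\) cannot contain \(e\) twice, because its three edges are distinct.
\item Hence \(e\) lies in at least \(\sum_{k} |w_{i,k}|\) faces of \(K_i\), the sum running over the components of \(\partial\mathbb{M}_i\) attached to \(c_j\). This is where twistedness is used: every \(|w_{i,k}| \geq 2\), so the count is at least 2. With a weight \(\pm 1\), a circle edge could be a free face of a single triangle of \(K_i\), and the claim would fail.
\end{itemize}

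\textbf{Step 2 (acyclicity gives a free pair).} Suppose, for contradiction, that every 2-cell of \(K_i\) is paired in \(-\nabla f\), say \(F\) with edge \(e_F\). Build a directed graph \(D_i\) on the faces of \(K_i\) with an arc \(F \to F'\) whenever \(F' \neq F\), \(F'\) is a face of \(K_i\), and \(e_F < F'\). A directed cycle in \(D_i\) is exactly a closed \(V\)-path
\[e_{F_0} < F_0 > e_{F_1} < F_1 > \dots,\]
which Theorem \ref{closed paths} forbids. So \(D_i\) is a finite acyclic digraph and has a sink \(F\). Being a sink means \(e_F\) lies in no face of \(K_i\) other than \(F\), which contradicts Step 1. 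Therefore some face of \(K_i\) is critical.

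The main obstacle is making Step 1 rigorous for circle edges. One must check that the faces adjacent to different preimages of \(e\) (possibly from several boundary components of the same \(\mathbb{M}_i\)) really are distinct simplices of \(K\). The plan is to argue directly from simpliciality, as above: a face of \(K\) containing \(e\) meets \(e\) in exactly one of its edges, so it has exactly one lift through \(e\). Step 2 is routine, and the restriction to faces of \(K_i\) is harmless. Faces of other surfaces may also contain \(e_F\) when \(e_F\) is a circle edge, but \(D_i\) only records faces of \(K_i\), and a cycle in \(D_i\) is still a genuine closed \(V\)-path in \(K\).
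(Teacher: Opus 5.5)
Your proof is correct and follows the paper's argument. The paper also pigeonholes to a $K_i$ with no critical 2-cell, then walks from a face to its paired edge and on to a different face of $K_i$ containing that edge, until finiteness forces a closed $V$-path; this is exactly your ``acyclic digraph has a sink'' step. Your Step 1 spells out the fact the paper asserts in one line from twistedness, namely that every edge of $K_i$ lies in at least two faces of $K_i$. Your sink formulation also avoids the paper's slightly loose claim that the walk returns to $\sigma_0$ itself. One small slip: the closed $V$-path traverses your directed cycle in the reverse direction, not as you indexed it, but this is harmless.
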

\begin{proof}
    For the sake of contradiction, assume that there exists a dmf \(f : K \to \R\) such that \(m_2(f) < |\mathcal{M}|\). Consider the Morse matching associated with \(V := - \nabla f\). Since \(m_2(f) < |\mathcal{M}|\) there must exist a subcomplex \(K_i\) of \(K\) corresponding to a surface \(\mathbb{M}_i \in \mathcal{M}\) such that \(K_i\) has no critical 2-cells. Hence, given \(\sigma^2_0 \in K_i\), there exists \(\tau_0^1 \in K_i\) such that \((\tau_0^1, \sigma_0^2) \in V\). Moreover, since \(X\) is a twisted stratifold, all the attaching maps have degree with absolute value greater than \(1\), which implies that there exists at least one \(\sigma_1^2 \in K_i\) with \(\sigma_1^2 \neq \sigma_0^2\) such that \(\tau_0^1 < \sigma_1^2\). 

    Note that we can repeat the former process indefinitely. As the number of 2-cells in \(K_i\) is finite, we can take \(m > 0\) such that \(\sigma_0^2 = \sigma_m^2\), giving us the closed \(V\)-path
    
    \[\tau_0^1 < \sigma_0^2 > \tau_m^1 < \sigma_m^2 > \tau_{m - 1}^1 < \sigma_{m - 1}^2 > \dots < \sigma_1^2 > \tau_0^1,\]
    
    which contradicts Theorem \ref{closed paths}. Therefore, \(m_2(f) \geq |\mathcal{M}|\). 
\end{proof}

From Theorems \ref{Homology Orientable} and \ref{Homology Non Orientable}, we also know that \(\beta_2(X; \mathbb{Z}) < n\) and \(\beta_2(X; \mathbb{F}) < n\) for any field \(\mathbb{F}\). Hence, we obtain the following corollary. 

\begin{corollary}\label{no_perfect}
    There are no perfect dmf's for spaces of Type 2 or 4. 
\end{corollary}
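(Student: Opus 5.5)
Let \(K\) be any triangulation of a twisted stratifold \(X(\mathcal{M}, \mathcal{C})\) of Type 2 or 4, let \(f : K \to \R\) be any dmf, and let \(\mathbb{K}\) be any field or \(\mathbb{Z}\). I will argue by contradiction, comparing the lower bound of Theorem \ref{2 cells} with the upper bound on \(\beta_2\) from Theorems \ref{Homology Orientable} and \ref{Homology Non Orientable}. Suppose \(f\) is \(\mathbb{K}\)-perfect, so that \(m_i(f) = \beta_i(K; \mathbb{K})\) for every \(i\). In particular, \(m_2(f) = \beta_2(K; \mathbb{K})\).

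\textbf{Lower bound on \(m_2\).} Since \(X\) is twisted, Theorem \ref{2 cells} applies and gives \(m_2(f) \geq |\mathcal{M}| = n\).

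\textbf{Upper bound on \(\beta_2\).} I will split into the two types. If \(X\) is of Type 2, all white vertices of \(G(X)\) carry non-negative weights, but no prime divides every sum \(\sum_{e \in E(i,j)} w(e)\). This is exactly the ``any other case'' clause of Theorem \ref{Homology Orientable}, so \(\beta_2(X; \mathbb{K}) < n\) for every \(\mathbb{K}\). If \(X\) is of Type 4, some white vertex has negative weight and \(2 \nmid \sum_{e \in E(i,j)} w(e)\) for some pair \((i,j)\). This is the ``any other case'' clause of Theorem \ref{Homology Non Orientable}, so again \(\beta_2(X; \mathbb{K}) < n\). Since Betti numbers are invariants of the underlying space, \(\beta_2(K; \mathbb{K}) = \beta_2(X; \mathbb{K})\) for the triangulation \(K\).

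\textbf{Contradiction.} Combining the two bounds gives
\[
n \leq m_2(f) = \beta_2(K; \mathbb{K}) < n,
\]
which is impossible. Hence \(f\) is not \(\mathbb{K}\)-perfect. Since \(K\), \(f\) and \(\mathbb{K}\) were arbitrary, no triangulation of a Type 2 or Type 4 twisted stratifold admits a perfect dmf over any coefficient ring.

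\textbf{Main obstacle.} Logically the argument is immediate; all the substance sits in the ``any other case'' clauses of Theorems \ref{Homology Orientable} and \ref{Homology Non Orientable}. Those clauses must hold for every coefficient field, including characteristic \(0\), and for \(\mathbb{Z}\), not only for \(\mathbb{Z}_p\). I will take them as stated. The one point to check is that the hypotheses of Types 2 and 4 really fall under those clauses. For Type 4 this amounts to noting that failure of the divisibility by \(2\) for some pair \((i,j)\) is the negation of the hypothesis of Theorem \ref{Homology Non Orientable}.
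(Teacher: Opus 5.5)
Your proof is correct and follows the paper's own argument: the paper obtains the corollary by combining the lower bound \(m_2(f) \geq |\mathcal{M}| = n\) from Theorem \ref{2 cells} with the strict bound \(\beta_2(X;\mathbb{K}) < n\), for \(\mathbb{Z}\) and every field, taken from the ``any other case'' clauses of Theorems \ref{Homology Orientable} and \ref{Homology Non Orientable}. Your check that Type 2 and Type 4 each fall under the appropriate clause, read within the setting of the corresponding theorem, is the only verification the argument needs.
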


\begin{theorem}\label{Optimal type 2-4}
    The dmf \(f : K \to \R\) obtained by the Algorithm is optimal for spaces of Type 2 and 4.  
\end{theorem}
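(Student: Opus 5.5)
The plan is to compare the algorithm's output with an arbitrary dmf \(g : K \to \R\) and show that \(\sum_i m_i(f) \leq \sum_i m_i(g)\). The ingredients are Theorem \ref{Algorthm Critical Cells}, Theorem \ref{2 cells}, and the Euler characteristic identity in Part 3 of Theorem \ref{Morse Inequalities}.

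First, for the algorithm's dmf \(f\), Theorem \ref{Algorthm Critical Cells} gives \(m_0(f) = 1\) and \(m_2(f) = n\), where \(n = |\mathcal{M}|\). Part 3 of Theorem \ref{Morse Inequalities} then determines \(m_1(f) = 1 + n - \chi(K)\). Hence
\[\sum_i m_i(f) = 2 + 2n - \chi(K).\]

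Second, let \(g\) be any dmf on \(K\). Applying Part 3 of Theorem \ref{Morse Inequalities} to \(g\) gives \(m_1(g) = m_0(g) + m_2(g) - \chi(K)\), so
\[\sum_i m_i(g) = 2m_0(g) + 2m_2(g) - \chi(K).\]
The total number of critical cells is therefore an increasing function of \(m_0\) and \(m_2\) alone. To bound these, note that \(m_0(g) \geq \beta_0(K) = 1\) by Part 1 of Theorem \ref{Morse Inequalities}, since \(X\) is connected. Theorem \ref{2 cells}, which applies because \(X\) is twisted, gives \(m_2(g) \geq n\). Combining, \(\sum_i m_i(g) \geq 2 + 2n - \chi(K) = \sum_i m_i(f)\), so \(f\) is optimal.

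This argument never uses the Type 2 or 4 hypothesis, so it shows optimality for every twisted stratifold. The type only matters for perfection: for Types 1 and 3 the function is moreover perfect by Theorem \ref{Perfect 1 and 3}, and for Types 2 and 4 no perfect dmf exists by Corollary \ref{no_perfect}, which is why optimality is the strongest conclusion available there.

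The main obstacle is that optimality cannot come from Betti-number bounds here. For Types 2 and 4, Theorems \ref{Homology Orientable} and \ref{Homology Non Orientable} give \(\beta_2 < n\), so the Morse inequalities alone cannot force \(m_2(g) \geq n\). The combinatorial lower bound of Theorem \ref{2 cells} is exactly what fills this gap, and it relies on the twisted hypothesis. The rest is the bookkeeping above, so I would check carefully that Theorem \ref{2 cells} holds for every triangulation \(K\), including the requirement that each \(K_i\) contains at least one 2-cell.
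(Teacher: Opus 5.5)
Your proposal is correct and is essentially the paper's argument: both combine \(m_0 \geq 1\) from connectedness, \(m_2 \geq n\) from Theorem \ref{2 cells}, and the Euler characteristic identity. The only cosmetic difference is that the paper concludes componentwise (\(m_1(h) \geq m_1(f)\)) while you bound the total \(2m_0(g) + 2m_2(g) - \chi(K)\) directly, and your remark that the Type 2/4 hypothesis is never used applies equally to the paper's proof.
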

\begin{proof}
    Since \(X\) is connected, for all dmf \(h : K \to \mathbb{R}\) we have that \(m_0(h) \geq 1 = m_0(f)\). Moreover, by Theorem \ref{2 cells}, \(m_2(h) \geq n = m_2(f)\). Hence, 
    \[\chi(X) = m_2(h) - m_1(h) + m_0(h) \geq n + 1 - m_1(h),\]
    where the first equality follows from \ref{Morse Inequalities}. Moreover we also have that 
    \[\chi(X) = m_2(f) - m_1(f) + m_0(f) = n + 1 - m_1(f).\]
    From these two expressions, it follows that \(m_1(h) \geq m_1(f)\). Since \(h\) was arbitrary, we conclude that \(f\) is optimal. 
\end{proof}

From Theorems \ref{Perfect 1 and 3}, \ref{Optimal type 2-4} and Corollary \ref{no_perfect}, we obtain the following result. 

\begin{theorem}\label{Main theorem}
    Let \(X(\mathcal{M}, \mathcal{C})\) be a twisted stratifold and \(K\) a finite triangulation of \(X\). Then, we can construct an optimal dmf on \(K\) in linear time. Moreover, this function is perfect if and only if the stratifold is of type 1 or 3; in cases 2 and 4, no perfect function exists. 
\end{theorem}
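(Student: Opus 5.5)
The plan is to assemble Theorem \ref{Main theorem} from the results already established, in three steps: construction and running time, optimality in every case, and the perfectness dichotomy.

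\textbf{Construction and optimality.} First, the algorithm of this section returns a set \(V\). By Theorem \ref{Algorthm Critical Cells}, \(V\) is a gradient vector field. Theorem \ref{closed paths} then gives a dmf \(f\) with \(V = -\nabla f\), and \(f\) has \(m_0(f)=1\) and \(m_2(f)=n=|\mathcal{M}|\). For the running time, each step is either a spanning-tree computation (graph search) or a single pass over cells, applied to graphs whose sizes add up to \(O(|K|)\). So the total cost is \(O(|K|)\). Recovering an explicit real-valued \(f\) from \(V\) is also linear, via a topological ordering of the acyclic modified Hasse diagram. For optimality, I would argue uniformly across all four types, as in Theorem \ref{Optimal type 2-4}. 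Any dmf \(h\) satisfies \(m_0(h)\ge 1\) by connectedness and \(m_2(h)\ge n\) by Theorem \ref{2 cells}. Part 3 of Theorem \ref{Morse Inequalities} then forces \(m_1(h)-m_1(f)=(m_0(h)-1)+(m_2(h)-n)\ge 0\). Hence \(\sum_i m_i(h)\ge\sum_i m_i(f)\). For types 1 and 3 this also follows from Theorem \ref{Perfect 1 and 3}, since perfect implies optimal.

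\textbf{Perfectness dichotomy.} For types 1 and 3, Theorem \ref{Perfect 1 and 3} already shows \(f\) is \(\mathbb{Z}_p\)-perfect (respectively \(\mathbb{Z}_2\)-perfect). The middle Betti number is handled by the Euler characteristic: since \(\chi\) is independent of coefficients, \(m_1(f)=n+1-\chi=\beta_1\). For types 2 and 4, the hypotheses of Theorems \ref{Homology Orientable} and \ref{Homology Non Orientable} fail, so \(\beta_2(X;\mathbb{K})<n\) for every field or \(\mathbb{Z}\). By Theorem \ref{2 cells}, every dmf \(h\) has \(m_2(h)\ge n>\beta_2(X;\mathbb{K})\). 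Therefore no dmf is \(\mathbb{K}\)-perfect for any \(\mathbb{K}\), which is Corollary \ref{no_perfect}. In particular \(f\) is not perfect. This gives the "if and only if": \(f\) is perfect exactly in types 1 and 3, and in types 2 and 4 no perfect function exists at all.

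\textbf{Expected obstacle.} The delicate point is making sure the four types are genuinely exhaustive and match the hypotheses of the two homology theorems. The orientable/non-orientable split and the divisibility conditions should be checked to line up exactly. Type 4 in particular needs the "in any other case" clause of Theorem \ref{Homology Non Orientable} to apply to every field \(\mathbb{K}\), including odd characteristic. I would verify this directly: in odd characteristic, \(\partial_2\) of a non-orientable face picks up \(2a_k\) terms, which are nonzero, so the face is not a cycle. With that settled, the rest is bookkeeping over the earlier theorems.
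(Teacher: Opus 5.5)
Your proposal is correct and is essentially the paper's argument. The paper assembles the theorem from Theorems \ref{Algorthm Critical Cells}, \ref{2 cells}, \ref{Perfect 1 and 3}, \ref{Optimal type 2-4} and Corollary \ref{no_perfect}, and your uniform optimality computation is just the proof of Theorem \ref{Optimal type 2-4}, which in fact never uses the type hypothesis. Your added checks are sound, and your \(\beta_1 = n+1-\chi\) has the right sign, unlike the display in the proof of Theorem \ref{Perfect 1 and 3}. One precision: the \(2a_k\) argument covers characteristic \(0\) and \(\mathbb{Z}\) as well as odd characteristic, while in characteristic \(2\) the odd sum \(\sum_{e\in E(i,j)} w(e)\) directly gives \(\partial_2(F_i)\neq 0\).
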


\section{Bibliography}
\renewcommand{\section}[2]{}
\bibliography{sn-bibliography}      

\end{document}